\newtheorem{defn}{Definition}[section]
\newtheorem{lemma}[defn]{Lemma}
\newtheorem{theorem}[defn]{Theorem}
\theoremstyle{definition}
\newtheorem*{remark}{Remark}
\newcommand{\lmfdbec}[3]{\href{http://www.lmfdb.org/EllipticCurve/Q/#1#2#3}{{\text{\rm#1#2#3}}}}
\newcommand{\Q}{\mathbb Q}
\newcommand{\Z}{\mathbb Z}
\newcommand{\Gal}{\operatorname{Gal}}
\begin{document}

\bibliographystyle{plain}

\title[Torsion Cyclotomic Extension]{Torsion of Rational Elliptic Curves over the $\mathbb{Z}_p$-Extensions of Quadratic Fields}

\author{Omer Avci}

\address{Dept. of Mathematics, Bogazici University, Istanbul, Turkey}
\email{omeravci372742@gmail.com} 



\begin{abstract} Let $E$ be an elliptic curve defined over $\mathbb{Q}$.
For a quadratic number field $K$ and an odd prime
number $p$, let $L$ be a $\Z_p$-extension of $K$. We prove that $E(L)_{\text{tors}}=E(K)_{\text{tors}}$ when $p>5$. It enables us to classify the groups that can be realized as the torsion subgroup $E(L)_{\text{tors}}$, by using the classification of torsion subgroups over the quadratic fields. 

\end{abstract}

\maketitle

\section{Introduction and Notation}

Let $L$ denote a number field, and let $E$ be an elliptic curve over $L$. The Mordell-Weil theorem states that the set of $L$-rational points on $E$ forms a finitely generated abelian group. Specifically, denoting $E(L)$ as the set of $L$-rational points on $E$, we have
$$
E(L) \cong E(L)_{\text{tors}} \oplus \mathbb{Z}^r.
$$
Here, $r$ is a nonnegative integer called the rank, and $E(L)_{\text{tors}}$ is a finite group known as the torsion subgroup of $E$ over $L$.

The classification of $E(L)_{\text{tors}}$ is a well-studied and compelling 
problem, with variations depending on whether one considers all elliptic curves
defined over $L$, or only those defined over $\mathbb{Q}$. A foundational 
result in this area is Mazur’s theorem, which gives a complete classification 
when $L = \mathbb{Q}$. Extending this work, Najman determined the possible 
torsion structures of rational elliptic curves over quadratic and cubic fields 
in \cite{NajmanQuadratic}.

A natural extension of this problem is to classify $E(L)_{\text{tors}}$, when 
the extension degree $[L:\Q]$ is not finite. For certain extensions $L$, the 
torsion subgroup remains finite, and all possible torsion structures can still 
be classified. As an example Chou classified $E(\Q^{ab})_{\text{tors}}$ in 
\cite{Chou}, where $\Q^{ab}$ is the maximal abelian extension of $\Q$.

Another example is the classification of $E(L)_{\text{tors}}$, where $L$ is a $\Z_p$-extension of a number field. 
Chou et al. determined the classification of $E(\Q_{\infty,p})_{\text{tors}}$ in 
\cite{Chou-p-adic}, where $\Q_{\infty,p}$ is the unique $\Z_p$-extension of 
$\Q$. Moreover, they proved $E(\Q_{\infty,p})_{\text{tors}} = E(\Q)_{\text{tors}}$ if $p\geq 5$.

In this paper, we prove an analogue of the result in \cite{Chou-p-adic} by considering the $\Z_p$-extensions of quadratic fields. Here, we present our main theorem.

\begin{theorem}\label{maintheorem}
Let $E/\Q$ be an elliptic curve. Let $K$ be a quadratic field. Let $p>5$ be a prime. Let $L$ be a $\Z_p$-extension of $K$. Then, $E(L)_{\text{tors}}= E(K)_{\text{tors}}$.
\end{theorem}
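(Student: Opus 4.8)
The plan is to reduce the equality to a prime-by-prime statement about the Galois action on torsion and then feed that into the classification of mod-$\ell$ images of rational elliptic curves, in the spirit of the argument for $\Q_{\infty,p}$ in \cite{Chou-p-adic}. Two structural facts drive everything. First, $G:=\Gal(L/K)\cong\Z_p$ is a torsion-free pro-$p$ group, so every finite subextension of $L/K$ is cyclic of $p$-power degree; in particular, for any torsion point $P\in E(L)$ the field $K(P)$ is cyclic of $p$-power degree over $K$. Second, a $\Z_p$-extension is unramified outside $p$: for a place $v\nmid p$ the pro-$p$ part of the local unit group $\mathcal{O}_v^{\times}$ is finite, so its image in the torsion-free group $\Z_p$ is trivial. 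Hence $K(P)/K$ is unramified away from $p$ for every $P\in E(L)_{\text{tors}}$. The inclusion $E(K)_{\text{tors}}\subseteq E(L)_{\text{tors}}$ is automatic, and I will prove the reverse.

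I first reduce to points of prime order. For a prime $\ell\neq p$, suppose inductively that $E(L)[\ell^{m-1}]=E(K)[\ell^{m-1}]$ and take $P\in E(L)[\ell^m]$ with $\ell P\in E(K)$. For $g\in G$ the element $g(P)-P$ lies in $E(L)[\ell]$, and since $E(L)[\ell]$ is $G$-fixed (the base case below), the map $g\mapsto g(P)-P$ is a continuous homomorphism $\Z_p\to E(K)[\ell]$; as $\ell\neq p$ this is zero, so $P\in E(K)$. Thus the whole problem reduces to: (a) $E(L)[\ell]=E(K)[\ell]$ for every prime $\ell\neq p$, and (b) $E(L)[p^\infty]=E(K)[p^\infty]$; finiteness of $E(L)_{\text{tors}}$ will emerge as a byproduct. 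The key numerical input is a degree bound: if $P$ has prime order $\ell$ and $P\notin E(K)$, then $K(P)/K$ has degree $p^c$ with $c\geq 1$, so $[K(P):\Q]=2p^c$ and hence $[\Q(P):\Q]\in\{p^c,2p^c\}$; in particular $p\mid[\Q(P):\Q]$ and $[\Q(P):\Q]\mid 2p^c$. But $[\Q(P):\Q]$ equals the size of the orbit of $P$ under $\operatorname{Im}\bar\rho_\ell=\Gal(\Q(E[\ell])/\Q)\leq\operatorname{GL}_2(\mathbb{F}_\ell)$.

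To settle (a) I run through Serre's classification of this image for $E/\Q$. If the image contains $\operatorname{SL}_2(\mathbb{F}_\ell)$, the orbit of every nonzero vector has size $\ell^2-1$, which for $\ell$ odd is divisible by $8$ and for $\ell=2$ equals $3$; neither has the form $2^a p^c$ with $a\leq 1$ and $p>5$, so the large-image case cannot produce a new point. (The case $\ell=2$ is finished directly, since an orbit of size $1$ or $2$ forces $\Q(P)\subseteq K$.) This leaves the reducible (Borel), Cartan-normalizer, and exceptional cases, which by Mazur's isogeny theorem and Serre's bounds confine $\ell$ to an explicit finite set. Here ramification does most of the work: the product of the two characters on a Borel is the mod-$\ell$ cyclotomic character, which is ramified at $\ell$, so the stable line carrying the ramified character gives $\Q(P)$ ramified at $\ell\neq p$, contradicting that $K(P)/K$ is unramified outside $p$. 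On the complementary line the acting character is unramified at $\ell$ and ramified only at the bad primes of $E$, so the unramified-outside-$p$ condition forces a very restrictive abelian extension; combined with the bound (prime-order torsion of $E/\Q$ over quadratic fields has order $\leq 13$ by \cite{NajmanQuadratic}, together with Mazur over $\Q$) this leaves only finitely many pairs $(\ell,p)$ — notably the CM primes $\ell\in\{43,67,163\}$ and the exceptional primes — which are eliminated by direct inspection of the finite list of $j$-invariants and their isogeny characters.

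Part (b) is analogous but here the cocycle argument no longer forces triviality, so the image analysis is essential. A mod-$p$ image containing $\operatorname{SL}_2(\mathbb{F}_p)$ gives orbit size $p^2-1$, coprime to $p$ and exceeding $2$ for $p>5$, hence not of the form $2^a p^c$; so $\bar\rho_p$ is reducible and $E$ admits a rational $p$-isogeny with kernel $C$. A new point $P$ of order $p$ of degree $p$ or $2p$ must lie off $C$, and its image in $E/C$ is then a point of order $p$ defined over $\Q$ or over a quadratic field; by Mazur and \cite{NajmanQuadratic} this forces $p\leq 13$, so $p\in\{7,11,13\}$. These finitely many primes, for the finitely many curves with the relevant $p$-isogeny, are checked directly, using the unramified-outside-$p$ constraint on $K(P)/K$ and the Weil-pairing obstruction (full $p$-torsion would force $\Q(\zeta_p)\subseteq L$, impossible since $[\Q(\zeta_p):\Q]=p-1$ is not a power of $p$). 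The main obstacle of the whole proof is precisely this residual verification: assembling the complete list of primes for which $\bar\rho_\ell$ (respectively $\bar\rho_p$) fails to be surjective for some rational curve, and confirming that none of the corresponding stable-line torsion points can be defined over a $\Z_p$-extension of a quadratic field. The hypothesis $p>5$ is used throughout to discard the small primes that would otherwise reintroduce the extra quadratic torsion appearing in \cite{NajmanQuadratic}.
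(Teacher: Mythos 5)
Your outline shares the paper's skeleton (Iwasawa's unramified-outside-$p$ property, the Weil-pairing exclusion of full torsion, the $[\Q(P):\Q]\in\{p^c,2p^c\}$ degree constraint, and the same terminal cases $(\ell,p)=(43,7),(67,11)$), and your cocycle devissage for $\ell\neq p$ is a genuinely cleaner replacement for the paper's $x$-coordinate orbit computations in Lemmas \ref{2torsionlemma4}--\ref{3torsionlemma}. But the heart of your argument, running Serre's classification of $\operatorname{Im}\bar\rho_\ell$, has a gap you cannot close as written: you never use the fact that $E(L)[\ell]$ is itself stable under $\Gal(\overline{\Q}/\Q)$, because $L/\Q$ is Galois. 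Without that observation, the irreducible non-surjective cases are genuinely open, not "confined to an explicit finite set": Mazur's isogeny theorem applies only once the image is reducible, and there is no known finite list of $j$-invariants for Cartan-normalizer images (uniformity there is an open problem). Concretely, the normalizer of a nonsplit Cartan modulo $67$ has order $2(67^2-1)=2^4\cdot 3\cdot 11\cdot 17$, so an order-$17$ subgroup acts on points of order $67$ with orbits of size exactly $17$; this passes your $p^c/2p^c$ sieve with $p=17$, and nothing in your write-up eliminates it. The paper's mechanism is precisely the missing step: since $E(L)[\ell]\cong\Z/\ell\Z$ is Galois-stable, it is the kernel of a rational $\ell$-isogeny (Lemma \ref{n-isogeny}), which both bounds $\ell$ via Theorem \ref{rationalisogeny} and makes Galois act on $E(L)[\ell]$ through a character into $(\Z/\ell\Z)^\times$, so the orbit of a generator divides $\gcd(\phi(\ell),2p^c)$ (Lemma \ref{4pairinggeneralmore}); this collapses the entire image analysis and leaves exactly the $p\mid \ell-1$ cases $(43,7)$ and $(67,11)$, which are then killed by the field of definition $\Q(\zeta_\ell)^+$ as you anticipate.

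Part (b) also does not close. Your final claim that the residual primes involve "finitely many curves with the relevant $p$-isogeny" that are "checked directly" is false: $X_0(7)$ and $X_0(13)$ have genus $0$, so for $p=7$ (and for your $p=13$, which is itself an overshoot --- Theorem \ref{najmanquadratic} forbids $11$- and $13$-torsion of rational curves over quadratic fields, so prime torsion there is at most $7$, not $13$) there are infinitely many curves with the relevant isogeny and no finite verification exists. The same stability observation repairs this without any quotient-curve detour: $E(L)[p]$ is cyclic (full $p$-torsion would force $\mu_p\subset L$, impossible since $p-1$ divides no integer of the form $p^a$ or $2p^a$) and Galois-stable, so Galois acts on it through $(\Z/p\Z)^\times$ and the orbit of a generator divides $p-1$, which is coprime to $p$; hence $c=0$ and $E(L)[p]=E(K)[p]$. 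Finally, your (b) treats only points of exact order $p$, while the devissage fails at $\ell=p$: the case $\Z/p^2\Z\subseteq E(L)$ must be excluded separately, which is immediate since it would force a rational $p^2$-isogeny, unavailable for $p>5$ by Theorem \ref{rationalisogeny}. In short, the strategy is salvageable and in places slicker than the paper's, but as written both (a) and (b) rest on case eliminations that would fail.
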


If $K$ is real quadratic field, it is guaranteed that the cyclotomic $\Z_p$-extension exist, and it may or may not be the unique $\Z_p$-extension of $K$.
If $K$ is imaginary quadratic field, it is guaranteed that there are infinitely many $\Z_p$-extensions, as shown in \cite{iwasawa}.
Among the $\Z_p$-extensions of an imaginary quadratic field $K$, there are two important fields,
 $K_{\text{cyc}}$ and $K_{\text{anti}}$, which denotes the cyclotomic and anti-cyclotomic $\Z_p$-extensions of $K$ respectively, where $\Gal(K_{cyc}/\Q)$ is pro-cyclic and $\Gal(K_{anti}/\Q)$ is pro-dihedral \cite{anticyclo}. Moreover, any $\Z_p$-extension $L$ of $K$ is contained in $K_{cyc}K_{anti}$.

We will review relevant results from the literature in Section \ref{literaturesection}. In Section \ref{eliminationsection}, we will introduce techniques developed for eliminating group structures that cannot be realized as $E(L)_{\text{tors}}$, where $L$ is any Galois extension of $\mathbb{Q}$. Finally, in Section \ref{edgecases}, we will focus on the torsion subgroups $E(K_{cyc})_{\text{tors}}$ and $E(K_{anti})_{\text{tors}}$ for the cases $p = 3$ and $p = 5$.

\noindent \textbf{Acknowledgements.} I would like to thank Antonio Lei for bringing this problem to my attention. I am also grateful to Payman Eskandari for his valuable comments and insightful discussions.

\section{Torsion of Rational Elliptic Curves} \label{literaturesection}
In this section, we will present some important results from the literature about the classification of $E(L)_{\text{tors}}$ and rational isogenies of the elliptic curves, where $L/\Q$ is a Galois extension and $E/\Q$ is an elliptic curve.

\begin{theorem}[Mazur, \cite{Mazur}]
   Let $E/\mathbb{Q}$ be an elliptic curve. Then
  $$E(\mathbb{Q})_{\text{tors}} \simeq 
  \begin{cases}
      \mathbb{Z} / N \mathbb{Z} & \text{ with } 1\leq N\leq 10 \text{ or } N=12 \text{ or},  \\
           \mathbb{Z} / 2 \mathbb{Z} \times      \mathbb{Z} /2 N \mathbb{Z} & \text{ with } 1\leq N\leq 4.
  \end{cases}  $$ 
  All of the 15 torsion subgroups appear infinitely often.
\end{theorem}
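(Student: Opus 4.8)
The plan is to translate the statement into the arithmetic geometry of modular curves and then treat the groups that occur and those that do not by entirely different means; this is Mazur's theorem, so the strategy I would follow is his. Recall that for $N\geq 4$ the modular curve $X_1(N)$ is a smooth projective curve over $\Q$ whose non-cuspidal rational points classify pairs $(E,P)$ with $E/\Q$ an elliptic curve and $P\in E(\Q)$ a point of exact order $N$; likewise the curve usually denoted $X_1(2,2N)$ has non-cuspidal rational points corresponding to curves $E/\Q$ with a rational torsion subgroup isomorphic to $\Z/2\Z\times\Z/2N\Z$. Thus the theorem is equivalent to deciding exactly which of these curves have a non-cuspidal rational point, and to showing that each nonempty point set is infinite.

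First I would dispose of the fifteen listed groups by exhibiting them. For $N\in\{1,\dots,10,12\}$ the curve $X_1(N)$ has genus $0$ and carries a rational cusp, so it is isomorphic to $\mathbb{P}^1_\Q$ and has infinitely many rational points; the same holds for $X_1(2,2N)$ with $N\in\{1,2,3,4\}$. Pulling back Kubert's universal families in Tate normal form along these rational base curves produces infinitely many non-isomorphic $E/\Q$ with each prescribed torsion, which yields both existence and the ``infinitely often'' clause at once.

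The real content is the elimination of every other group. By divisibility of torsion it suffices to rule out non-cuspidal rational points on a short list of levels: the composite and prime-power levels $N\in\{14,15,16,18\}$ and the curves $X_1(2,10),X_1(2,12)$, together with \emph{all} prime levels $p\geq 11$. For the genus-one levels ($N=11,14,15$) and the genus-two levels ($N=13,16,18$, and the two $X_1(2,2N)$ curves of genus $\geq 1$) I would compute the Mordell--Weil group of the Jacobian directly: in each case one shows the relevant abelian variety has rank $0$ over $\Q$, enumerates its finitely many rational points (via Chabauty in the genus-two cases), and verifies that every one of them is a cusp. These are finite, if delicate, descent computations.

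The main obstacle is the uniform treatment of all primes $p$, because the genus of $X_1(p)$ grows with $p$ and no single finite computation can cover infinitely many of them. Here I would invoke Mazur's Eisenstein-ideal machinery: pass to $X_0(p)$ and its Jacobian $J_0(p)$, introduce the Eisenstein ideal $\mathfrak{I}$ in the Hecke algebra, and prove that the cuspidal divisor class group accounts for the $\mathfrak{I}$-torsion and that the Eisenstein quotient of $J_0(p)$ has Mordell--Weil rank $0$ over $\Q$. A reduction argument at auxiliary good primes then constrains any non-cuspidal rational point of $X_0(p)$ so tightly that it cannot exist for large $p$, and it cannot carry a rational point of order $p$ unless the underlying curve has complex multiplication; the finitely many exceptional and CM primes are then excluded by hand, since such curves over $\Q$ admit no rational $p$-torsion. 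Combining this with the finite-level Jacobian computations shows that rational torsion of prime order occurs only for $p\in\{2,3,5,7\}$ and pins down the full list, completing the classification.
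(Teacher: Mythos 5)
The paper does not prove this statement at all: it is quoted as Mazur's theorem with a citation, so there is no internal proof to compare against, and your proposal must be measured against the actual proof in the literature. At that level, your outline is faithful to the real strategy: the moduli interpretation of $X_1(N)$ and $X_1(2,2N)$, the genus-zero parametrizations (Tate normal form) giving the fifteen groups infinitely often, rank-zero Jacobian computations at the small composite levels, and Mazur's Eisenstein-ideal/formal-immersion machinery for a uniform treatment of prime levels. Two small inaccuracies: the reduction of non-cyclic structures to $\Z/2\Z\times\Z/2N\Z$ should be justified explicitly by the Weil pairing ($\mu_m\subset\Q$ forces $m\le 2$), and the clause ``it cannot carry a rational point of order $p$ unless the underlying curve has complex multiplication'' conflates the torsion theorem with Mazur's isogeny theorem: in the torsion argument the formal immersion step rules out non-cuspidal rational points outright for large $p$ (with $p=11$ due to Billing--Mahler and $p=13$ to Mazur--Tate), whereas the CM exceptions $p\in\{11,19,43,67,163\}$ occur only for isogenies, not for rational $p$-torsion.

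The genuine gap is in your reduction step. You assert that by divisibility it suffices to eliminate $N\in\{14,15,16,18\}$, the curves $X_1(2,10)$ and $X_1(2,12)$, and all primes $p\ge 11$; this list is incomplete. The minimal impossible cyclic orders under divisibility also include $N=20,21,24,25,27,35,49$: for instance $\Z/20\Z$ has only $\Z/4\Z$ and $\Z/10\Z$ among its maximal subgroups, both of which are realized, so nothing on your list excludes a point of order $20$; the same applies to $24$ (via $8$ and $12$), $21$ and $35$ (products of realized prime orders), and the prime powers $25,27,49$ (since $5,9,7$ are realized). Moreover some of these cannot be dodged by isogeny considerations: $X_0(25)$ and $X_0(27)$ have infinitely many, respectively infinitely many non-cuspidal and CM, rational points, so $X_1(25)$ and $X_1(27)$ must be shown to have no non-cuspidal rational points directly, as Kubert and Ligozat did using rank-zero elliptic quotients. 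These omissions are repairable by the same finite-level techniques you invoke, but as written your case analysis does not exhaust all abelian groups and hence does not complete the classification.
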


\begin{theorem}(Najman, \cite{NajmanQuadratic})
\label{najmanquadratic}
  Let $E/\mathbb{Q}$ be an elliptic curve, and let $K$
be a quadratic number field. Then
$$E(K)_{\text{tors}} \simeq 
  \begin{cases}
      \mathbb{Z} / N \mathbb{Z} & \text{ with } 1\leq N\leq 10 \text{ or } N=12,15,16,\text{ or}\\
           \mathbb{Z} / 2 \mathbb{Z} \times      \mathbb{Z} /2 N \mathbb{Z} & \text{ with } 1\leq N\leq 6 \text{ or}\\
           \mathbb{Z} /3  \mathbb{Z}  \times    \mathbb{Z} /3 N \mathbb{Z} & \text{ with } N=1,2, \text{ only if } K=\mathbb{Q}(\sqrt{-3}) \text{ or}\\
              \mathbb{Z} /4  \mathbb{Z} \times    \mathbb{Z} /4 \mathbb{Z} & \text{ only if } K=\mathbb{Q}(\sqrt{-1}).        
  \end{cases}  $$
  Each of these groups, except for $\mathbb{Z}/15\mathbb{Z}$, appears as the torsion structure over a quadratic field for infinitely many pairs of rational elliptic curves $E$ and quadratic field $K$. The elliptic curves \lmfdbec{50}{b}{1} and \lmfdbec{50}{a}{3} have $15$-torsion over $\mathbb{Q}(\sqrt{5})$, \lmfdbec{50}{b}{2} and \lmfdbec{450}{b}{4} have $15$-torsion over $\mathbb{Q}(\sqrt{-15})$. These are the only
rational curves having non-trivial $15$-torsion over any quadratic field.
\end{theorem}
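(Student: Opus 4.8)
The plan is to derive this classification from the unconditional determination of torsion over quadratic fields for \emph{arbitrary} elliptic curves, due to Kenku--Momose and Kamienny, and then to exploit the hypothesis that $E$ is defined over $\Q$ to prune that list and to pin down the exceptional fields. The Kenku--Momose--Kamienny theorem supplies a finite list of candidate groups, namely $\Z/N\Z$ for $N\le 16$ or $N=18$, together with $\Z/2\Z\times\Z/2N\Z$ for $N\le 6$, $\Z/3\Z\times\Z/3N\Z$ for $N\le 2$, and $\Z/4\Z\times\Z/4\Z$. The task is then, for each candidate, to decide whether it is realized by a base change $E_K$ of a rational curve, and if so over which quadratic field $K$.

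The structural engine is the Galois action. Fix $K$ with $\Gal(K/\Q)=\langle\sigma\rangle$ and a point $P\in E(K)$ of order $N$. Then $\sigma$ either stabilizes the cyclic group $\langle P\rangle$, in which case $\langle P\rangle$ is $G_\Q$-stable and $E$ carries a $\Q$-rational cyclic $N$-isogeny, or else $\langle P,P^\sigma\rangle$ is a strictly larger $G_\Q$-stable subgroup. In the first case Mazur's theorem on rational isogenies bounds $N$; crucially, since $P$ becomes rational already over the quadratic field $K$, the isogeny character $\chi\colon G_\Q\to(\Z/N\Z)^\times$ describing the $G_\Q$-action on $\langle P\rangle$ is trivial on $G_K$ and hence satisfies $\chi^2=1$, which is a far stronger constraint than merely admitting the isogeny and is what ultimately excludes several degrees. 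In the second case the enlarged stable subgroup typically contains full $m$-torsion, and the Weil pairing forces $\zeta_m\in K$: full $3$-torsion requires $\zeta_3\in K$, i.e.\ $K=\Q(\sqrt{-3})$, and full $4$-torsion requires $\zeta_4\in K$, i.e.\ $K=\Q(\sqrt{-1})$. This already yields the last two lines of the statement.

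The remaining and harder work is to eliminate $N\in\{11,13,14,18\}$, to confirm that $N=16$ does occur, and to determine exactly which curves and fields give $N=15$, the single sporadic value. For this I would translate the problem to the modular curves $X_1(N)$: a rational curve $E$ with a $K$-rational point of order $N$ produces a point of $X_1(N)$ defined over the quadratic field $K$ whose image under $X_1(N)\to X(1)$ is a rational $j$-invariant. One must therefore classify the quadratic points of $X_1(N)$ that lie over rational points of the $j$-line but are not themselves rational on $X_1(N)$, the rational points corresponding to $\Q$-rational torsion already covered by Mazur's theorem. Analyzing the genus and the known low-degree points of each $X_1(N)$, one shows that $N=11,13,14,18$ yield no such points, whereas $X_1(15)$, of genus $1$, retains only the four rational curves listed, their $15$-torsion being defined over $\Q(\sqrt5)$ and $\Q(\sqrt{-15})$.

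I expect this last step to be the genuine obstacle. The abstract part --- bounding $N$ through rational isogenies, forcing the isogeny character to be quadratic, and reading off the degenerate fields from the Weil pairing --- is clean and disposes of most candidates quickly. But ruling out the sporadic degrees $11,13,14,18$ and extracting the precise quadratic points responsible for $15$-torsion is not formal: it requires explicit equations for the relevant $X_1(N)$, control of their rational and quadratic points, and a careful study of the map to the $j$-line, and that is where the substantive content of the theorem resides.
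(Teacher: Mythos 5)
This statement is quoted background: the paper imports it from Najman with a citation and contains no proof of its own, so the only meaningful comparison is with the argument in the literature. Your structural observations are sound as far as they go: the dichotomy between $\langle P\rangle$ being $G_\Q$-stable and $\langle P,P^\sigma\rangle$ being a larger stable subgroup is correct, the isogeny character does factor through $\Gal(K/\Q)$ and hence satisfies $\chi^2=1$, and the Weil-pairing argument correctly pins full $3$- and $4$-torsion to $\Q(\sqrt{-3})$ and $\Q(\sqrt{-1})$. (For comparison, the standard route in the literature prunes the odd part differently: for odd $m$ one has $E(K)[m]\cong E(\Q)[m]\oplus E^d(\Q)[m]$ where $E^d$ is the quadratic twist by $K$, so Mazur's theorem applied to $E$ and $E^d$ bounds the odd torsion at once, without invoking isogeny characters.)

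The genuine gap is that everything separating this theorem from the Kenku--Momose--Kamienny list is deferred rather than proved. Your character argument does dispose of $N=11$ (the finitely many $j$-invariants with an $11$-isogeny have isogeny character of order $5$, the torsion field being $\Q(\zeta_{11})^{+}$), but it cannot dispose of $N=13$ or $N=18$: $X_0(13)$ and $X_0(18)$ have genus $0$, so infinitely many rational curves carry such isogenies, and a quadratic isogeny character is not a priori excluded; eliminating these degrees genuinely requires determining the quadratic points with rational $j$-invariant on $X_1(13)$ and $X_1(18)$ (genus-$2$ curves, handled via rank computations on their Jacobians), and the same kind of computation on $X_1(15)$ is what produces the exact four curves \lmfdbec{50}{b}{1}, \lmfdbec{50}{a}{3}, \lmfdbec{50}{b}{2}, \lmfdbec{450}{b}{4} with their fields $\Q(\sqrt{5})$ and $\Q(\sqrt{-15})$. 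You name this step but carry out none of it, so what you have is a correct roadmap, not a proof. Moreover, the final clauses of the statement --- that every listed group except $\Z/15\Z$ arises for infinitely many pairs $(E,K)$, including $\Z/16\Z$ and $\Z/4\Z\times\Z/4\Z$ over $\Q(\sqrt{-1})$ --- are existence and infinitude claims requiring explicit parametrizations (e.g.\ rational points on $X_1(16)$ acquiring order-$16$ points over varying quadratic fields), and your write-up does not address them at all.
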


\begin{theorem}(Chou, \cite{Chou}, Theorem 1.2)\label{overmaximalabelian}
      Let $E/\mathbb{Q}$ be an elliptic curve. 
    Then $E(\Q^{ab})_{\text{tors}}$ is isomorphic to one of the following
groups:
$$E(\Q^{ab})_{\text{tors}} \simeq 
  \begin{cases}
      \mathbb{Z} / N \mathbb{Z} &  1, 3, 5, 7, 9, 11, 13, 15, 17, 19, 21, 25, 27, 37, 43, 67, 163, \\
           \mathbb{Z} / 2 \mathbb{Z} \times      \mathbb{Z} /2 N \mathbb{Z} & \text{ with } 1\leq N\leq 9,\\
           \mathbb{Z} /3  \mathbb{Z}  \times    \mathbb{Z} /3 N \mathbb{Z} & \text{ with } N=1,3, \\
              \mathbb{Z} /4  \mathbb{Z} \times    \mathbb{Z} /4N \mathbb{Z} &\text{ with } N=1,2,3,4, \\
                 \mathbb{Z} /5  \mathbb{Z} \times    \mathbb{Z} /5 \mathbb{Z}\\
                    \mathbb{Z} /6  \mathbb{Z} \times    \mathbb{Z} /6 \mathbb{Z}\\
                       \mathbb{Z} /8  \mathbb{Z} \times    \mathbb{Z} /8 \mathbb{Z}
  \end{cases}  $$
\end{theorem}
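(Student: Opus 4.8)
The plan is to translate the problem into one about Galois representations. Since $\Q^{ab}$ is the fixed field of (the closure of) the commutator subgroup $[G_\Q,G_\Q]$ of $G_\Q=\Gal(\overline{\Q}/\Q)$, a torsion point $P\in E(\overline{\Q})_{\text{tors}}$ lies in $E(\Q^{ab})$ precisely when its field of definition $\Q(P)$ is abelian over $\Q$, equivalently when $P$ is fixed by $[G_\Q,G_\Q]$. First I would record that $E(\Q^{ab})_{\text{tors}}$ decomposes as $\bigoplus_\ell E(\Q^{ab})[\ell^\infty]$ over primes $\ell$, and that since $\Q^{ab}/\Q$ is Galois, each $E(\Q^{ab})[\ell^n]$ is a $G_\Q$-stable subgroup of $E[\ell^n]$. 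Thus it suffices to determine, for each $\ell$, the largest $\ell$-power torsion defined over $\Q^{ab}$, and then to decide which combinations of these local pieces can occur simultaneously on a single curve.

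Next I would bound the primes that can appear. If $E(\Q^{ab})[\ell]$ has order $\ell$, then it is a $G_\Q$-stable line in $E[\ell]\cong(\Z/\ell\Z)^2$, so $E$ admits a rational cyclic $\ell$-isogeny; Mazur's theorem on rational isogenies then forces $\ell\in\{2,3,5,7,11,13,17,19,37,43,67,163\}$, which already explains the primes occurring in the cyclic part of the classification. If instead $E(\Q^{ab})[\ell]=E[\ell]$, then $\Q(E[\ell])\subseteq\Q^{ab}$ and the mod-$\ell$ image $\rho_{E,\ell}(G_\Q)$ is abelian; combined with the fact that $\det\rho_{E,\ell}$ is the mod-$\ell$ cyclotomic character (Weil pairing), this is very restrictive and confines full $\ell$-torsion to small $\ell$. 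At this stage I would separate the analysis into the non-CM case and the CM case, the latter consisting of the finitely many rational $j$-invariants with complex multiplication; the exceptional large cyclic groups $\Z/N\Z$ for $N\in\{37,43,67,163\}$ arise precisely from CM curves, where the image of Galois lies in the normalizer of a Cartan subgroup and abelian points are comparatively plentiful.

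For each surviving prime $\ell$ I would then compute the maximal $\ell$-power torsion over $\Q^{ab}$ by analyzing the image of $\rho_{E,\ell^\infty}$ modulo $\ell^n$ and extracting the subgroup of $E[\ell^n]$ fixed by the commutator. In the non-CM case this uses the known classification of possible mod-$\ell^n$ images (the $2$-adic image lattice for $\ell=2$, and the corresponding results for $\ell=3,5,7,13$), together with division-field computations that control when a point of order $\ell^n$ has abelian field of definition; this is what produces the prime-power entries such as $\Z/25\Z$ and $\Z/27\Z$. In the CM case I would instead use the theory of the associated Hecke character and the Cartan/normalizer structure to pin down the exact abelian $\ell$-power torsion. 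The outcome of this step is the list of admissible $\ell$-primary groups feeding each row of the table.

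Finally I would assemble the global classification: decide which products of the admissible $\ell$-primary parts are realized by a single curve, rule out incompatible coexistences (for instance large $2$-power and large odd torsion together) via entanglement and index arguments on the mod-$N$ image, and exhibit explicit curves or infinite families realizing each surviving group. The hard part will be the third step, the precise determination of the $\ell$-power torsion: the $\ell=2$ analysis is the most delicate because $\mathrm{GL}_2(\Z/2^n\Z)$ is large and the $2$-power torsion over $\Q^{ab}$ can be as big as $\Z/8\Z\times\Z/8\Z$, while the CM curves require separate, CM-theoretic input both to produce and to bound the exceptional primes $37,43,67,163$.
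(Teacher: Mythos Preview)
The paper does not prove this statement at all: Theorem~\ref{overmaximalabelian} is quoted verbatim from Chou \cite{Chou} as background material in Section~\ref{literaturesection}, with no proof given or attempted. So there is no ``paper's own proof'' against which to compare your proposal.

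That said, your outline is a reasonable high-level sketch of the strategy Chou actually carries out in \cite{Chou}: reduce to $\ell$-primary parts, use Galois-stability to get rational isogenies and hence bound the primes via Theorem~\ref{rationalisogeny}, split into CM and non-CM cases, analyze the mod-$\ell^n$ images to bound the $\ell$-power torsion, and then assemble and realize the global list. Be aware, though, that what you have written is only a plan, not a proof: the substantive work lies in the image analysis (particularly at $\ell=2$ and for the CM curves) and in the compatibility/entanglement step, and none of that is carried out here. If your intent was to reproduce the argument of \cite{Chou}, you are pointed in the right direction; if your intent was to supply a proof the present paper omits, note that the omission is deliberate---the result is simply being cited.
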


\begin{theorem}(Chou, \cite{Chou}, Corollary to Theorem 1.2)\label{allgroupslist}
      Let $E/\mathbb{Q}$ be an elliptic curve. Let $L$ be an abelian Galois extension of $\Q$.
    Then $E(L)_{\text{tors}}$ is isomorphic to one of the following
groups:
$$E(L)_{\text{tors}} \simeq 
  \begin{cases}
      \mathbb{Z} / N \mathbb{Z} & \text{ with } 1\leq N\leq 19, \text{ or } N=21,25,27,37,43,67,163, \\
           \mathbb{Z} / 2 \mathbb{Z} \times      \mathbb{Z} /2 N \mathbb{Z} & \text{ with } 1\leq N\leq 9,\\
           \mathbb{Z} /3  \mathbb{Z}  \times    \mathbb{Z} /3 N \mathbb{Z} & \text{ with } N=1,2,3, \\
              \mathbb{Z} /4  \mathbb{Z} \times    \mathbb{Z} /4N \mathbb{Z} &\text{ with } N=1,2,3,4, \\
                 \mathbb{Z} /5  \mathbb{Z} \times    \mathbb{Z} /5 \mathbb{Z}\\
                    \mathbb{Z} /6  \mathbb{Z} \times    \mathbb{Z} /6 \mathbb{Z}\\
                       \mathbb{Z} /8  \mathbb{Z} \times    \mathbb{Z} /8 \mathbb{Z}
  \end{cases}  $$
\end{theorem}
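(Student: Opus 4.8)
The plan is to deduce the statement directly from Theorem \ref{overmaximalabelian}, using the single structural fact that every abelian Galois extension of $\Q$ is contained in the maximal abelian extension $\Q^{ab}$. First I would record the reduction: since $L/\Q$ is abelian and Galois we have $L \subseteq \Q^{ab}$, hence $E(L) \subseteq E(\Q^{ab})$, and in particular $E(L)_{\text{tors}}$ is isomorphic to a subgroup of $E(\Q^{ab})_{\text{tors}}$. By Theorem \ref{overmaximalabelian} the group $E(\Q^{ab})_{\text{tors}}$ is one of the finitely many groups listed there, so it suffices to understand, up to isomorphism, the subgroups of those groups.

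Second, I would carry out the (elementary, finite) enumeration of subgroups by decomposing each group on the $\Q^{ab}$ list into its primary components and listing subgroups up to isomorphism. One checks two inclusions. On one hand, every subgroup of a group in Theorem \ref{overmaximalabelian} is isomorphic to a group appearing in Theorem \ref{allgroupslist}: the odd cyclic subgroups are exactly the odd cyclic groups already listed; the even cyclic subgroups fill in $\Z/2N\Z$ for $1\le N\le 9$, so that $\Z/M\Z$ occurs for every $1\le M\le 19$; and the noncyclic subgroups are exactly the $\Z/2\Z\times\Z/2N\Z$ $(1\le N\le 9)$, $\Z/3\Z\times\Z/3N\Z$ $(N=1,2,3)$, $\Z/4\Z\times\Z/4N\Z$ $(N=1,2,3,4)$, together with $\Z/5\Z\times\Z/5\Z$, $\Z/6\Z\times\Z/6\Z$, and $\Z/8\Z\times\Z/8\Z$. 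On the other hand, each group in Theorem \ref{allgroupslist} does embed into one of the groups of Theorem \ref{overmaximalabelian}; for instance $\Z/3\Z\times\Z/6\Z$, the only genuinely new rank-two group, embeds into $\Z/6\Z\times\Z/6\Z$, and each even $\Z/M\Z$ embeds into the appropriate $\Z/2\Z\times\Z/2N\Z$. The point that needs care here is the negative one: that no subgroup yields a group outside the stated list. This follows because no group in Theorem \ref{overmaximalabelian} simultaneously carries the required combination of $2$-, $3$-, and $5$-primary parts, which rules out candidates such as $\Z/20\Z$, $\Z/3\Z\times\Z/12\Z$, and $\Z/2\Z\times\Z/20\Z$.

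Third, for sharpness I would argue that each group on the list in Theorem \ref{allgroupslist} is genuinely realized as $E(L)_{\text{tors}}$ for some rational $E$ and some abelian $L$. Many already occur over $\Q$ or over a quadratic field, both of which are abelian over $\Q$, by Mazur's theorem and Theorem \ref{najmanquadratic}; the remaining groups (such as $\Z/8\Z\times\Z/8\Z$, $\Z/6\Z\times\Z/6\Z$, $\Z/5\Z\times\Z/5\Z$, the large prime-order cyclic groups, and the intermediate subgroups) are realized by the curves produced in the proof of Theorem \ref{overmaximalabelian}, taking $L$ to be the subfield of $\Q^{ab}$ cut out by the desired subgroup.

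I expect the main obstacle to be this last realizability step rather than the reduction. Verifying that for each listed subgroup $H$ of some $E(\Q^{ab})_{\text{tors}}$ there is an intermediate abelian field $L$ with $E(L)_{\text{tors}}$ equal to $H$ exactly (and no larger) requires controlling the Galois action on the torsion, for example separating the field of definition of a rational cyclic piece from the field of the full $2$-torsion. The subgroup enumeration, by contrast, is purely mechanical once the primary decompositions are written out, and the reduction $L\subseteq\Q^{ab}$ is immediate.
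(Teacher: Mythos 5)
Your argument is correct, and it is essentially the intended one: the paper itself gives no proof of this statement but imports it from Chou (Corollary to Theorem 1.2 of \cite{Chou}), where it is derived exactly as in your first two steps --- every abelian $L/\Q$ satisfies $L \subseteq \Q^{ab}$, so $E(L)_{\text{tors}}$ is a subgroup of $E(\Q^{ab})_{\text{tors}}$, and the finite subgroup enumeration (including the observations that $\Z/3\Z \times \Z/6\Z$ embeds in $\Z/6\Z \times \Z/6\Z$ and that the even cyclic groups $\Z/2N\Z$, $1 \leq N \leq 9$, fill in the gaps in the cyclic list) checks out. One remark: your third step and the "main obstacle" you anticipate are superfluous, since the theorem only asserts that $E(L)_{\text{tors}}$ is isomorphic to one of the listed groups, not that every listed group is realized; the subgroup-closure direction alone completes the proof, and in fact only the inclusion "every subgroup of a group in Theorem \ref{overmaximalabelian} appears in the list" is logically required.
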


\begin{theorem}[Fricke, Kenku, Klein, Kubert, Ligozat, Mazur, and Ogg, among others]\label{rationalisogeny}
If $E/\Q$ has an $n$-isogeny, $n \leq 19 $ or $n \in \{21, 25, 27, 37, 43, 67, 163\}$. If E does not have complex multiplication, then $n \leq 18$ or $n \in \{21, 25, 37\}$.
\end{theorem}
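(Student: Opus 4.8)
The plan is to translate the statement into a question about rational points on modular curves and then invoke the deep finiteness and explicit-determination results that collectively establish it. Recall that the modular curve $X_0(n)$ is a coarse moduli space whose non-cuspidal points parametrize pairs $(E',C)$ with $C$ a cyclic subgroup of $E'$ of order $n$, and that $E'/\Q$ has a rational cyclic $n$-isogeny exactly when the associated point of $X_0(n)$ is $\Q$-rational (away from the finitely many elliptic points and cusps, which require separate bookkeeping). Thus the theorem is equivalent to the assertion that $X_0(n)(\Q)$ contains a non-cuspidal point only for the listed values of $n$. Because a cyclic $n$-isogeny with $n=\prod_i \ell_i^{e_i}$ factors into cyclic $\ell_i^{e_i}$-isogenies, one first records the possible prime-power levels and then controls how they can be combined.

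First I would dispose of the small-genus cases. For the finitely many $n$ with $X_0(n)$ of genus $0$ carrying a rational point (namely $n\le 10$ and $n=12,13,16,18,25$), the curve is $\Q$-isomorphic to $\mathbb{P}^1$, so it has infinitely many non-cuspidal rational points and the corresponding isogenies occur for infinitely many $E/\Q$; these account for the ``generic'' entries. For the genus-one levels, $X_0(n)$ is an elliptic curve over $\Q$, and one determines $X_0(n)(\Q)$ from its Mordell--Weil group: when the rank is $0$ the rational points are enumerated directly and checked against the cusps, pinning down whether a non-cuspidal point, hence the isogeny, exists.

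The heart of the matter, and the main obstacle, is the prime and prime-power levels of genus $\ge 2$. For prime level this is Mazur's theorem from the Eisenstein-ideal work: if $E/\Q$ admits a rational $p$-isogeny then $p\in\{2,3,5,7,11,13,17,19,37,43,67,163\}$. Its proof is the genuinely hard input --- one studies the Jacobian $J_0(p)$, passes to the quotient by a suitable piece of the Eisenstein ideal (or to the winding quotient) to obtain an abelian quotient of rank $0$, and then runs a formal-immersion argument at a prime of good reduction to force a putative non-cuspidal rational point to reduce onto a cusp, a contradiction unless $p$ is exceptional; the four large primes $37,43,67,163$ survive and correspond exactly to rational points with complex multiplication. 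I would cite this wholesale rather than reprove it.

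Finally I would assemble the prime-power and composite levels following Kenku: ruling out the higher prime powers ($p^2$ for $p\ge 7$, $p^3$ for $p\ge 3$, and so on) and the remaining composite $n$ by the same genus and rational-point analysis applied to $X_0(n)$, noting that the CM curves contribute the levels $27,37,43,67,163$ while the non-CM list closes at $18$ together with $21,25,37$. Combining Mazur's bound with these computations yields exactly the stated finite list and the CM/non-CM split. The argument is therefore not monolithic but an orchestration of Mazur's prime-level theorem with explicit modular-curve computations; the step I expect to be genuinely immovable is Mazur's Eisenstein-ideal bound, everything else being finite verification once that input is granted.
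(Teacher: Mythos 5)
The paper offers no proof of this theorem at all: it is stated as a citation of the classical results of Mazur (prime level, via the Eisenstein ideal and formal immersions on quotients of $J_0(p)$) and Kenku and others (prime-power and composite levels, via explicit determination of $X_0(n)(\Q)$), and your outline reproduces exactly that standard architecture — reduction to non-cuspidal rational points on $X_0(n)$, the genus-$0$ and genus-$1$ enumeration, Mazur's theorem as the immovable input, and Kenku's assembly — so your proposal is a faithful sketch of the actual literature proof the paper is invoking. One factual slip worth correcting: the two non-cuspidal rational points on $X_0(37)$ are \emph{not} CM points (which is why $37$ appears in the non-CM list of the statement itself); the levels occurring only for CM curves are $19, 27, 43, 67, 163$, so your claim that the four surviving primes $37, 43, 67, 163$ ``correspond exactly to rational points with complex multiplication'' is wrong for $37$, and your later CM list should include $19$ and exclude $37$.
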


For $n \in \{11, 15, 17, 19, 21, 27, 37, 43, 67, 163\}$, the modular curve $X_0(n)$ has finitely many rational points. Consequently, there are only finitely many elliptic curves defined over $\Q$, or more precisely, finitely many rational $j$-invariants, with an $n$-isogeny. The corresponding list of $j$-invariants can be found in Table 2 of Section 7 in \cite{Chou} and Table 4 of Section 9 in \cite{fieldofdefinition}.

\begin{lemma}[Chou, \cite{Chou}]\label{n-isogeny}
    Let $L$ be a Galois extension of $\Q$, and let $E$ be an elliptic curve over $\Q$. If $E(L)_{\text{tors}}\cong \Z/m\Z \times \Z/mn\Z$, then $E$ has an $n$-isogeny over $\Q$.
\end{lemma}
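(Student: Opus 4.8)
The plan is to exploit the fact that, since $L/\Q$ is Galois and $E$ is defined over $\Q$, the torsion subgroup $T := E(L)_{\text{tors}}$ is stable under the natural action of $G := \Gal(\overline{\Q}/\Q)$. Indeed, any $\sigma \in G$ restricts to an automorphism of $L$, hence carries $L$-rational points to $L$-rational points, so $\sigma(T) = T$. Writing $T \cong \Z/m\Z \times \Z/mn\Z$, I would then isolate a cyclic subgroup of order $n$ inside $T$ that is preserved by $G$, and use it to build a rational isogeny.

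The natural candidate is $C := mT$, the image of $T$ under the multiplication-by-$m$ map $[m]\colon E \to E$. Choosing generators $a, b$ of $T$ of orders $m$ and $mn$ respectively, one has $ma = 0$, while $mb$ has order $mn/\gcd(mn,m) = n$; hence $C = \langle mb \rangle$ is cyclic of order exactly $n$, and $C \subseteq E[n]$. Since $[m]$ is defined over $\Q$ it commutes with the $G$-action, so the $G$-stability of $T$ forces $C = mT$ to be $G$-stable as well.

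Finally, a cyclic subgroup $C \subset E$ of order $n$ that is stable under $G = \Gal(\overline{\Q}/\Q)$ is precisely the kernel of an isogeny $\phi\colon E \to E/C$ defined over $\Q$ of degree $|C| = n$; this is the standard dictionary between $G$-stable finite subgroups and rational isogenies. It follows that $E$ admits an $n$-isogeny over $\Q$, as claimed.

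The argument is short, and the only points requiring care are that the Galois stability of $T$ genuinely uses the hypothesis that $L/\Q$ is Galois (and not merely that $E$ is defined over $\Q$), together with the verification that $mT$ has order exactly $n$ rather than a proper divisor, which is what pins down the isogeny degree. I expect no serious obstacle beyond this bookkeeping with the orders of the generators.
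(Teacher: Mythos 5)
Your proof is correct and coincides with the standard argument for this lemma, which the paper imports from Chou \cite{Chou} without reproducing: one observes that $E(L)_{\text{tors}}$ is $\Gal(\overline{\Q}/\Q)$-stable because $L/\Q$ is Galois, and that $m\,E(L)_{\text{tors}}$ is a Galois-stable cyclic subgroup of order exactly $n$, hence the kernel of an isogeny defined over $\Q$. Both the identification of the order-$n$ subgroup and the dictionary between Galois-stable finite subgroups and rational isogenies are handled correctly, so there is nothing to add.
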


\section{Eliminating Possible Torsion}\label{eliminationsection}

In this section, we present our results on the classification of $E(L)_{\text{tors}}$, where $L/\Q$ is a Galois extension and $E/\Q$ is an elliptic curve. We will also provide the proof of our main theorem at the conclusion of this section.

\begin{lemma}\label{finitetorsion}
Let $E/\Q$ be an elliptic curve. 
  Let $L$ be a Galois extension of $\Q$. Let $n\geq 2$ be the largest positive integer such that $\mu_n\subset L$. Then, $|E(L)_{\text{tors}}|\leq 163n^2.$  
\end{lemma}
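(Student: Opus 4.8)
The plan is to bound the two invariants of $E(L)_{\text{tors}}$ separately, using the Weil pairing for the full-level part and the rational-isogeny bounds for the cyclic part. Throughout I use that $E(L)_{\text{tors}}$ is a subgroup of $E(\bar\Q)_{\text{tors}}\cong(\Q/\Z)^2$ and that, since $E$ is defined over $\Q$ and $L/\Q$ is Galois, it is stable under $\Gal(L/\Q)$; consequently every $\Gal(L/\Q)$-stable subgroup of it is in fact $\Gal(\bar\Q/\Q)$-stable. The two tools are: (a) if $E[d]\subseteq E(L)$ then the Weil pairing $E[d]\times E[d]\to\mu_d$ forces $\mu_d\subseteq L$, so $d\mid n$ and in particular $d\le n$; and (b) if $E(L)_{\text{tors}}\cong\Z/m\Z\times\Z/mk\Z$ then Lemma \ref{n-isogeny} produces a rational $k$-isogeny, whence $k\le 163$ by Theorem \ref{rationalisogeny}. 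Granting finiteness, writing $E(L)_{\text{tors}}\cong\Z/m\Z\times\Z/mk\Z$ and applying (a) to the subgroup $E[m]$ together with (b) yields $|E(L)_{\text{tors}}|=m^2k\le 163\,n^2$.

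The step that requires genuine care is establishing that $E(L)_{\text{tors}}$ is finite, i.e. ruling out an infinite factor. For this I would examine the $\ell$-primary part $T_\ell:=E(L)[\ell^\infty]\subseteq(\Q_\ell/\Z_\ell)^2$ for each prime $\ell$. Since $(\Q_\ell/\Z_\ell)^2$ is an Artinian $\Z_\ell$-module, any infinite submodule has nonzero maximal divisible subgroup, so it suffices to exclude a divisible subgroup. If $T_\ell$ contained $(\Q_\ell/\Z_\ell)^2$, then $E[\ell^j]\subseteq E(L)$ for all $j$, so $\mu_{\ell^\infty}\subseteq L$ by (a), contradicting the existence of a largest $n$ with $\mu_n\subseteq L$. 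If $T_\ell$ contained a single copy of $\Q_\ell/\Z_\ell$, that copy is the maximal divisible subgroup of $T_\ell$, hence characteristic and $\Gal(L/\Q)$-stable; its subgroups of order $\ell^j$ are then $\Gal(\bar\Q/\Q)$-stable cyclic subgroups for every $j$, producing rational $\ell^j$-isogenies of unbounded degree and contradicting Theorem \ref{rationalisogeny}. Thus each $T_\ell$ is finite. Finally a nonzero $T_\ell$ contains either $\Z/\ell\Z\times\Z/\ell\Z$, forcing $\ell\mid n$ via (a), or a $\Gal$-stable cyclic subgroup of order $\ell$, forcing a rational $\ell$-isogeny and hence placing $\ell$ in the finite set of Theorem \ref{rationalisogeny}; either way $T_\ell=0$ for all but finitely many $\ell$, so $E(L)_{\text{tors}}=\bigoplus_\ell T_\ell$ is finite.

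With finiteness in hand the bound follows from the combination described above. I expect the divisible-part analysis, translating ``infinite torsion'' into either $\mu_{\ell^\infty}\subseteq L$ or an unbounded family of rational isogenies, to be the main obstacle; the closing inequality $m^2k\le 163\,n^2$ is then a one-line consequence of the Weil-pairing bound $m\le n$ and the isogeny bound $k\le 163$.
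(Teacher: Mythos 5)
Your proposal is correct, and it takes a genuinely different route to the one real difficulty, namely finiteness. The paper never confronts the possibly infinite group over $L$ directly: it takes an arbitrary finite subextension $K\subset L$, where Mordell--Weil already guarantees finiteness, writes $E(K)_{\text{tors}}\cong \Z/a\Z\times\Z/ab\Z$, bounds $a\le n$ by the Weil pairing and $b\le 163$ by Lemma~\ref{n-isogeny} and Theorem~\ref{rationalisogeny}, and then observes that the bound $163n^2$ is uniform over all finite subfields; since every finite set of torsion points of $E(L)$ lies in a finite Galois subextension, the bound passes to $L$, and finiteness comes for free. You instead work with $E(L)_{\text{tors}}$ itself and prove finiteness structurally: an infinite $T_\ell\subseteq(\Q_\ell/\Z_\ell)^2$ has nonzero maximal divisible subgroup (your Artinian stabilization argument for this is sound), and you kill divisible corank $2$ via $\mu_{\ell^\infty}\subseteq L$ and corank $1$ via the characteristic, hence Galois-stable, copy of $\Q_\ell/\Z_\ell$, whose $\ell^j$-torsion gives rational cyclic $\ell^j$-isogenies with $\ell^j>163$ --- in effect re-deriving the mechanism behind Lemma~\ref{n-isogeny} rather than citing it. What each buys: the paper's exhaustion argument is shorter and avoids all structure theory of $\ell$-primary groups, at the cost of leaving implicit both the passage from finite subfields to $L$ and the fact that the subfield can be taken Galois over $\Q$ (needed to invoke Lemma~\ref{n-isogeny}); your version is longer but self-contained over $L$, and your closing application of Lemma~\ref{n-isogeny} directly to $L$ is legitimate since that lemma is stated for arbitrary Galois extensions of $\Q$, with the final inequality $m^2k\le 163n^2$ identical in both proofs.
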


\begin{proof}
   Let $K \subset L$ be any finite extension of $\mathbb{Q}$ of degree $[K:\mathbb{Q}] < \infty$. Then, $E(K)_{\text{tors}}$ is finite and has the form $E(K)_{\text{tors}} \cong \mathbb{Z}/a\mathbb{Z} \times \mathbb{Z}/ab\mathbb{Z}$ for some positive integers $a$ and $b$. By the Weil pairing, we must have $\mu_a \subset K$. From our assumption about $L$, it follows that $a \leq n$. Moreover, by Lemma~\ref{n-isogeny}, $E$ has a rational $b$-isogeny. Then, by Theorem~\ref{rationalisogeny}, we obtain $b \leq 163$. As a result, we conclude that $|E(K)_{\text{tors}}| \leq 163n^2$.
Since this bound is uniform for all such subfields $K$ of $L$, we deduce that
$|E(L)_{\text{tors}}| \leq 163n^2.$
\end{proof}

\begin{lemma}[A., Lemma 3.1, \cite{Omer}] \label{4pairinggeneralmore}
  Let $E/\Q$ be an elliptic curve. Let $L$ be a Galois number field of degree $n$.  Let $q$ be an odd prime power. Let $E(L)[q] \cong \Z/q\Z$. If $\gcd(n,\phi(q))=1$ then $E(L)[q] = E(\Q)[q].$ If $\gcd(n,\phi(q))=2$ then $E(L)[q] = E(\Q(\sqrt{d}))[q]$ for some square-free integer $d$ satisfying $\sqrt{d} \in L$.  
\end{lemma}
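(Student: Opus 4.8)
The plan is to study the natural action of $G := \Gal(L/\Q)$ on $E(L)[q]$ and extract a character whose image is constrained simultaneously by $n$ and by $\phi(q)$. Since $E$ is defined over $\Q$ and $L/\Q$ is Galois, each $\sigma \in G$ maps $E(L)[q]$ to itself, and because this group is cyclic of order $q$, fixing a generator $P$ of $E(L)[q]$ allows me to write $\sigma(P) = \chi(\sigma)\,P$ for a unique $\chi(\sigma) \in (\Z/q\Z)^\times \cong \Aut(E(L)[q])$. This defines a homomorphism $\chi \colon G \to (\Z/q\Z)^\times$ whose kernel is precisely the subgroup of $G$ fixing $P$. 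The image $\chi(G)$ is a subgroup of $(\Z/q\Z)^\times$, so $|\chi(G)|$ divides $\phi(q)$; it also divides $|G| = n$, whence $|\chi(G)|$ divides $\gcd(n,\phi(q))$.

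If $\gcd(n,\phi(q)) = 1$, then $\chi$ must be trivial, so $P$ is fixed by all of $G$ and therefore $P \in E(\Q)$; combined with the obvious inclusion $E(\Q)[q] \subseteq E(L)[q]$ this gives $E(L)[q] = \langle P \rangle = E(\Q)[q]$. If $\gcd(n,\phi(q)) = 2$ and $\chi$ has image of order $2$, then $H := \ker\chi$ has index $2$ in $G$, so its fixed field is a quadratic field $\Q(\sqrt d)$ with $\sqrt d \in L$ for a unique square-free $d$. Since $H$ fixes $P$, we obtain $P \in E(\Q(\sqrt d))$, and together with the inclusion $E(\Q(\sqrt d))[q] \subseteq E(L)[q]$ coming from $\Q(\sqrt d) \subseteq L$ this yields $E(L)[q] = E(\Q(\sqrt d))[q]$.

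The remaining subcase, $\gcd(n,\phi(q)) = 2$ with $\chi$ trivial, forces $P \in E(\Q)$ but supplies no genuine quadratic subfield of $L$; indeed $G$ may possess no index-two subgroup at all (for instance if $G$ is a nonabelian simple group of even order). I would dispose of this uniformly by taking $d = 1$, which is square-free under the standard convention, so that $\Q(\sqrt d) = \Q \subseteq L$ and $E(L)[q] = E(\Q)[q] = E(\Q(\sqrt d))[q]$. Beyond this bookkeeping I do not anticipate any real obstacle: once the character $\chi$ is in place the argument is essentially formal, and the only point requiring care is to pair the descent of the generator $P$ with the reverse inclusion, thereby upgrading a statement about $P$ into the desired equality of full $q$-torsion subgroups.
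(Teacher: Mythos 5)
Your proof is correct: the character $\chi\colon\Gal(L/\Q)\to(\Z/q\Z)^\times$ attached to the Galois action on the cyclic group $E(L)[q]$, with $|\chi(G)|$ dividing $\gcd(n,\phi(q))$, followed by descent of a generator to the fixed field of $\ker\chi$, is exactly the standard argument — the paper itself does not reprove this lemma but imports it from \cite{Omer}, and this is the same device the paper uses elsewhere (e.g.\ in the proof of Lemma \ref{anticyclomain}, where a point of odd order $q$ is scaled by a unit $a$ with $a^2\equiv 1$). Your handling of the degenerate subcase ($\gcd(n,\phi(q))=2$ with $\chi$ trivial, where $G$ may lack an index-two subgroup, resolved by taking $d=1$) is careful bookkeeping consistent with the lemma's statement, and the two-sided inclusions correctly upgrade the descent of the single generator to equality of the full $q$-torsion subgroups.
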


\begin{lemma}\label{2torsionlemma4}
     Let $L$ be a finite Galois extension of $\mathbb{Q}$ with $[L:\mathbb{Q}]=n$ for some positive integer $n$ not divisible by $4$. Let $E/\mathbb{Q}$ be an elliptic curve. Then, there is a square-free integer $d$ satisfying $\sqrt{d}\in L$ and the following:
     \begin{enumerate}[(i)]
         \item If $E(L)[2^\infty]\cong \Z/2^k\Z$ for some $m\geq 0$, then $E(L)[2^\infty]=E(\Q(\sqrt{d}))[2^\infty]$.
         \item If $E(L)[4]\cong \Z/2\Z\times \Z/4\Z$, then $E(L)[4]=E(\Q(\sqrt{d}))[4]$. 
         \item If $E(L)[8]\cong \Z/2\Z\times \Z/8\Z$, then $E(L)[2^\infty]=E(L)[8]=E(\Q(\sqrt{d}))[8]$.
     \end{enumerate}
\end{lemma}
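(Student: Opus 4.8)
The plan is to study the action of $G := \Gal(L/\Q)$ on the $2$-primary torsion module $M := E(L)[2^\infty]$, which is a $G$-module precisely because $E$ is defined over $\Q$. Write $M \cong \Z/2^a\Z \times \Z/2^b\Z$ with $a \le b$, and let $\rho\colon G \to \Aut(M)$ be the associated representation. The central fact I would establish first is that whenever $a < b$ (in particular whenever $M$ is cyclic), the group $\Aut(M)$ is a $2$-group. This is a direct check: an automorphism must send each of a chosen pair of generators to an element of the same order and must preserve the characteristic subgroups $2^iM$, and counting the admissible images of a generating pair shows $\lvert\Aut(\Z/2^a\Z\times\Z/2^b\Z)\rvert$ is a power of $2$ exactly when $a\neq b$. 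The balanced case $a=b$ is genuinely excluded, since $\Aut(\Z/2\Z\times\Z/2\Z)\cong S_3$ has order divisible by $3$; this is why it is essential to work with the full module $M$ and not merely with $E(L)[2]\cong\Z/2\Z\times\Z/2\Z$, whose automorphism group need not be a $2$-group.

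Granting this, I would argue as follows assuming $a < b$. Then $\rho(G)$ is a $2$-group whose order divides $n=[L:\Q]$, and since $4\nmid n$ we get $\lvert\rho(G)\rvert\le 2$. Because $\ker\rho$ is normal in $G$ and $L/\Q$ is Galois, its fixed field $F := L^{\ker\rho}$ is Galois over $\Q$ of degree $\lvert\rho(G)\rvert\in\{1,2\}$; hence $F=\Q$ or $F=\Q(\sqrt d)$ for a unique square-free $d$ with $\sqrt d\in L$ (take $d=1$ in the former case, so that a single $d$ is produced uniformly from the module $M$). By construction every point of $M$ is fixed by $\ker\rho$, so $M\subseteq E(F)_{\text{tors}}$; since $F\subseteq L$ forces $E(F)[2^\infty]\subseteq M$, we conclude $E(F)[2^\infty]=M=E(L)[2^\infty]$. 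This uniform descent statement is the engine behind all three parts.

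It then remains to verify that each hypothesis forces $a<b$ and to read off the stated conclusion with this $d$. In (i) the module is cyclic, so $a=0<b$ (or $M$ is trivial, which is immediate), and the descent gives (i) directly. If $E(L)[4]\cong\Z/2\Z\times\Z/4\Z$, then comparing $4$-torsion gives $a=1$ and $b\ge 2$, so $a<b$; taking $4$-torsion of $E(L)[2^\infty]=E(F)[2^\infty]$ yields (ii). Likewise $E(L)[8]\cong\Z/2\Z\times\Z/8\Z$ forces $a=1$ and $b\ge 3$, and taking $8$-torsion gives $E(L)[8]=E(F)[8]$, which is the first half of (iii).

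For the remaining assertion of (iii), namely $E(L)[2^\infty]=E(L)[8]$, I would use the descent to reduce to $F$: we have $E(F)[2^\infty]\cong\Z/2\Z\times\Z/2^b\Z$ with $b\ge 3$. By Najman's classification over quadratic fields (Theorem~\ref{najmanquadratic}), a group of the form $\Z/2\Z\times\Z/2^b\Z$ can occur only when $2^b=2N$ with $N$ a power of $2$ and $N\le 6$, i.e.\ $N\in\{1,2,4\}$, forcing $b\le 3$; combined with $b\ge 3$ this gives $b=3$, so $E(F)[2^\infty]=E(F)[8]$ and hence $E(L)[2^\infty]=E(L)[8]$. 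I expect the sharp bound $b=3$ to be the main obstacle: the rational-isogeny bound of Theorem~\ref{rationalisogeny} together with Lemma~\ref{n-isogeny} only yields $b\le 5$, since a group $\Z/2\Z\times\Z/16\Z$ would merely require an $8$-isogeny, which those results permit. It is therefore the finer quadratic classification, rather than the isogeny bound alone, that is needed to exclude the larger $2$-power torsion.
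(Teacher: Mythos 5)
Your proof is correct, and it takes a genuinely different route from the paper's. The paper works pointwise with coordinates: for each relevant order $2^k$ it partitions the $x$-coordinates of the points of that order into $\Gal(L/\Q)$-orbits, uses $4 \nmid n$ to produce an orbit of size at most $2$, concludes that some generating point is defined over the unique quadratic subfield $K$ (or $\Q$), and then checks that this point together with the lower torsion spans the whole group; in part (iii) the exponent bound is extracted by an ad hoc analysis of putative order-$16$ points (the ``orbits of sizes $3$ and $5$'' case) played against Theorem~\ref{najmanquadratic}. You replace all of this bookkeeping with one structural observation: for an unbalanced module $M \cong \Z/2^a\Z \times \Z/2^b\Z$ with $a < b$ (including the cyclic case $a=0$), $\Aut(M)$ is a $2$-group, so the image of $\Gal(L/\Q) \to \Aut(E(L)[2^\infty])$ has $2$-power order dividing $n$, hence order at most $2$, and the fixed field $F$ of the kernel is $\Q$ or a quadratic field over which the entire $2$-primary module is rational. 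This yields the three parts uniformly (each hypothesis does force $a<b$ for the full module, as you verify), produces a single $d$ at once, gives the slightly stronger conclusion $E(L)[2^\infty]=E(\Q(\sqrt{d}))[2^\infty]$ even under hypothesis (ii), and reduces the bound $b=3$ in (iii) to reading off Najman's classification after descent (for $F=\Q$ quote Mazur, or embed $E(\Q)_{\text{tors}}$ in the torsion over any quadratic field --- a case your write-up glosses over but which is immediate); your remark that Lemma~\ref{n-isogeny} plus Theorem~\ref{rationalisogeny} alone would only give $b\le 5$ correctly identifies why the quadratic classification is indispensable there. What the paper's method buys in exchange is exactly the balanced case your argument cannot touch: when $a=b$ the automorphism group is no longer a $2$-group, and the companion Lemma~\ref{2torsionlemma34} handles $E(L)[4]\cong\Z/4\Z\times\Z/4\Z$ by the analogous divisibility trick with $\lvert\mathrm{GL}_2(\Z/4\Z)\rvert=96$ (which needs the extra hypothesis $3\nmid n$) --- so your descent is really a clean systematization, at the level of the full module, of the idea the paper deploys only in that separate case, while the orbit counting covers the unbalanced cases without it.
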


\begin{proof}
If $n$ is even, then let $K$ be the unique quadratic subfield of $L$, and otherwise let $K = \mathbb{Q}$. Let us deal with the first statement, as the other two are relevant later.

Assume that the elliptic curve has the equation
\begin{equation*}
E \colon y^2 = x^3 + Ax + B
\end{equation*}
for some $A, B \in \mathbb{Q}$. If the cubic $x^3 + Ax + B$ has no rational root, then its splitting field either lies in $L$ or intersects $\mathbb{Q}$ trivially with $L$. This would imply that either all of the $2$-torsion lies in $E(L)_{\text{tors}}$, or none of it does. This contradicts the assumption that $E(L)[2] \cong \mathbb{Z}/2\mathbb{Z}$. Hence, we must have $E(L)[2] = E(\mathbb{Q})[2]$.

Now, we will show that for any $k \geq 1$, we have
\begin{equation*}
E(L)[2^k] = E(K)[2^k].
\end{equation*}

There are $2^{k-1}$ points of order $2^k$ in $E(L)_{\text{tors}}$, yielding $2^{k-2}$ distinct $x$-coordinates. Consider the $\mathrm{Gal}(L/\mathbb{Q})$ orbits of these $x$-coordinates, and let $s$ be the smallest orbit size. Pick a point $P = (x, y) \in E(L)_{\text{tors}}$ of order $2^k$ such that
\begin{equation*}
|\{x^\sigma : \sigma \in \mathrm{Gal}(L/\mathbb{Q})\}| = s.
\end{equation*}
Then $[\mathbb{Q}(x):\mathbb{Q}] = s$, and since $[\mathbb{Q}(x, y):\mathbb{Q}(x)] \leq 2$, we get
\begin{equation*}
[\mathbb{Q}(P):\mathbb{Q}] = s \text{ or } 2s.
\end{equation*}
As $P$ generates $E(L)[2^k]$, any other point of order $2^k$ lies in $\mathbb{Q}(P)$, so each orbit size divides $2s$. Since $s$ is minimal, all orbit sizes are $s$ or $2s$, and they partition the $2^{k-2}$ distinct $x$-coordinates. Therefore, $s$ divides $2^{k-2}$ and must be a power of $2$.

Moreover, since $s$ divides $n$ and $n$ is not divisible by $4$, we must have $s = 1$ or $2$. Hence, $x \in K$, and since $\mathbb{Q}(P)$ cannot be a quartic field, $P \in E(K)_{\text{tors}}$. Thus, $E(L)[2^k] = E(K)[2^k]$.

Now consider the second statement. There are $4$ points of order $4$ in $E(L)_{\text{tors}}$, giving $2$ distinct $x$-coordinates. These $x$-coordinates are permuted by $\mathrm{Gal}(L/\mathbb{Q})$, so they lie in $K$. Let $P = (x, y) \in E(L)[4]$ be one such point. Then $[\mathbb{Q}(x, y):\mathbb{Q}(x)] \leq 2$, so $P \in E(K)_{\text{tors}}$. Since $2P$ has order $2$ and lies in $E(K)_{\text{tors}}$, the cubic $x^3 + Ax + B$ must have a root in $K$, implying all its roots lie in $K$. Thus,
\begin{equation*}
E(K)[2] \cong \mathbb{Z}/2\mathbb{Z} \times \mathbb{Z}/2\mathbb{Z}.
\end{equation*}
Together with $P$, this generates $E(L)[4]$, so $E(L)[4] = E(K)[4]$.

Now, for the third statement, consider the $8$ points of order $8$ in $E(L)_{\text{tors}}$, corresponding to $4$ distinct $x$-coordinates. Partition these into $\mathrm{Gal}(L/\mathbb{Q})$ orbits. Since $n$ is not divisible by $4$, there cannot be a single orbit of size $4$, so at least one orbit has size at most $2$.

Let $P = (x, y)$ be a point of order $8$ such that
\begin{equation*}
|\{x^\sigma : \sigma \in \mathrm{Gal}(L/\mathbb{Q})\}| \leq 2.
\end{equation*}
Then $x \in K$, and since $[\mathbb{Q}(x, y):\mathbb{Q}(x)] \leq 2$, also $y \in K$, so $P \in E(K)_{\text{tors}}$. Since all $2$-torsion lies in $K$ and together with $P$ these generate $E(L)[8]$, we conclude that $E(L)[8] = E(K)[8]$.

To complete the proof, consider points of order $16$ in $E(L)_{\text{tors}}$. If $E(L)[2^\infty] \neq E(L)[8]$, then
\begin{equation*}
E(L)[16] \cong \mathbb{Z}/2\mathbb{Z} \times \mathbb{Z}/16\mathbb{Z}.
\end{equation*}
There are $16$ such points, giving $8$ distinct $x$-coordinates. Analyzing $\mathrm{Gal}(L/\mathbb{Q})$ orbits, there must be an orbit of size at most $2$ unless the orbits are of size $3$ and $5$. In the first case, we use the same reasoning as before to conclude $E(L)[16] = E(K)[16]$, contradicting Theorem~\ref{najmanquadratic}.

Now suppose there are two orbits of sizes $3$ and $5$. Pick $P = (x, y) \in E(L)_{\text{tors}}$ of order $16$ such that
\begin{equation*}
|\{x^\sigma : \sigma \in \mathrm{Gal}(L/\mathbb{Q})\}| = 3.
\end{equation*}
Then $[\mathbb{Q}(x):\mathbb{Q}] = 3$ and $[\mathbb{Q}(P):\mathbb{Q}] = 3$ or $6$. Since $P$ and the $2$-torsion generate $E(L)[16]$, the entire torsion subgroup is defined over a field of degree at most $6$, implying that all other $x$-coordinates also lie in this field. But then the other orbit cannot have size $5$, leading to a contradiction. Therefore, such an orbit structure is impossible.

This completes the proof of the third statement.
\end{proof}

\begin{lemma}\label{2torsionlemma34}
      Let $L$ be a finite Galois extension of $\mathbb{Q}$ with $[L:\mathbb{Q}]=n$ for some positive integer $n$ not divisible by $3$ and $4$. Let $E/\mathbb{Q}$ be an elliptic curve. Then, $E(L)[2^\infty]$ can be the following:
     \begin{equation*}
        E(L)[2^\infty] \cong \begin{cases}
              \mathbb{Z} / 2^k \mathbb{Z} & \text{ with } 0\leq k\leq 4, \text{ or } \\
           \mathbb{Z} / 2 \mathbb{Z} \times      \mathbb{Z} /2^k \mathbb{Z} & \text{ with } 1\leq k\leq 3, \text{ or}\\
              \mathbb{Z} /4  \mathbb{Z} \times    \mathbb{Z} /4 \mathbb{Z} & \text{ only if } \mathbb{Q}(\sqrt{-1})\subset L.    
        \end{cases} 
     \end{equation*}
     Moreover, there is a square-free integer $d$ satisfying $\sqrt{d}\in L$ and $E(L)[2^\infty]=E(\Q(\sqrt{d}))[2^\infty]$.
\end{lemma}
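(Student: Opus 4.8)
The plan is to argue through the field of definition of the $2$-primary torsion, exploiting a special feature of automorphism groups of abelian $2$-groups. First, write $H := E(L)[2^\infty]\cong \Z/2^a\Z\times\Z/2^b\Z$ with $0\le a\le b$, and let $F=\Q(H)\subseteq L$ be the field generated by the coordinates of the points of $H$. Since $L/\Q$ is Galois and $E$ is defined over $\Q$, the group $H$ is $\Gal(\overline{\Q}/\Q)$-stable, so $F/\Q$ is Galois and restriction gives a faithful action $\Gal(F/\Q)\hookrightarrow \Aut(H)$; moreover $[F:\Q]$ divides $n$. In every case the goal will be to show $[F:\Q]\le 2$, for then $F=\Q$ or $F=\Q(\sqrt d)$ for some square-free $d$ with $\sqrt d\in L$, and $H=E(F)[2^\infty]$ (the chain $E(F)[2^\infty]\subseteq E(L)[2^\infty]=H\subseteq E(F)[2^\infty]$ being immediate), which is exactly the ``moreover'' assertion. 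The admissible list for $H$ then follows by reading off the $2$-primary parts allowed by Theorem~\ref{najmanquadratic} (and Mazur's theorem when $F=\Q$). The argument splits according to whether $a<b$ or $a=b$.

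For $a<b$ (which includes the cyclic case $a=0$), the key point is that $\Aut(\Z/2^a\Z\times\Z/2^b\Z)$ is a $2$-group whenever $a\neq b$. Indeed, for a finite abelian $2$-group the odd part of the order of its automorphism group is the product, over the distinct exponents occurring in the cyclic decomposition, of the odd part of $|GL_{m}(\mathbb F_2)|$, where $m$ is the multiplicity of that exponent; when all exponents are distinct every $m=1$, and $|GL_1(\mathbb F_2)|=1$. Granting this, $\Gal(F/\Q)\hookrightarrow\Aut(H)$ forces $[F:\Q]$ to be a power of $2$, and since $4\nmid n$ we obtain $[F:\Q]\le 2$. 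Theorem~\ref{najmanquadratic} then leaves only $H\cong\Z/2^k\Z$ with $k\le 4$ (from $\Z/16\Z$ and smaller) and, since here $a=1<b$, $H\cong\Z/2\Z\times\Z/2^k\Z$ with $k\in\{2,3\}$; in particular $\Z/4\Z\times\Z/8\Z$ is excluded because it is not realizable over a field of degree $\le 2$.

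For $a=b\ge 1$ we have $H=E[2^a]$, so the Weil pairing gives $\mu_{2^a}\subseteq L$ and hence $2^{a-1}=[\Q(\mu_{2^a}):\Q]$ divides $n$; as $4\nmid n$ this forces $a\le 2$. If $a=1$, then $F=\Q(E[2])$ is the splitting field of the cubic $x^3+Ax+B$, whose degree lies in $\{1,2,3,6\}$ and divides $n$; since $3\nmid n$ the degree is at most $2$, giving $H\cong\Z/2\Z\times\Z/2\Z$. If $a=2$, then $F=\Q(E[4])$ with $\Gal(F/\Q)\hookrightarrow GL_2(\Z/4\Z)$, so $[F:\Q]$ divides $\gcd(n,|GL_2(\Z/4\Z)|)=\gcd(n,96)$; because $4\nmid n$ and $3\nmid n$ this gcd lies in $\{1,2\}$, while the Weil pairing gives $\Q(i)=\Q(\mu_4)\subseteq F$ of degree $2$, whence $F=\Q(i)$ and $H\cong\Z/4\Z\times\Z/4\Z$ with $\Q(\sqrt{-1})\subseteq L$. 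Collecting the cases $a<b$, $a=b=1$, $a=b=2$ yields precisely the stated list, and in each case $H=E(F)[2^\infty]$ with $[F:\Q]\le 2$, which gives the required $d$.

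I expect the main obstacle to be the automorphism-group fact in the second paragraph: one must justify cleanly that distinct cyclic $2$-exponents force $\Aut(H)$ to be a $2$-group. This can be done either by quoting the Hillar--Rhea order formula for $|\Aut(A)|$, or, more self-containedly, by combining the fact that the kernel of the reduction map $\Aut(H)\to\Aut(H/2H)$ is a $2$-group with an analysis of the characteristic filtrations $2^iH$ and $H[2^i]$ that an odd-order automorphism would have to respect. As a fallback staying closer to the style of Lemma~\ref{2torsionlemma4}, the subcase $a=1$ can instead be handled by invoking parts (ii) and (iii) of that lemma directly (part (iii) immediately rules out $b\ge 4$ by forcing $E(L)[2^\infty]=E(L)[8]$), leaving only the genuinely new input, the $a=2$ analysis via $GL_2(\Z/4\Z)$, to be carried out by hand.
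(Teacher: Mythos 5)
Your argument is correct, but it reaches the conclusion by a genuinely different mechanism than the paper's proof in the main cases $a<b$. The paper handles $a=0$ and $a=1$ by invoking Lemma~\ref{2torsionlemma4}, whose proof is a hands-on analysis of $\Gal(L/\Q)$-orbits of $x$-coordinates of points of exact order $2^k$ (orbit sizes divide $2s$, are powers of $2$, and divide $n$), together with the isogeny bound of Theorem~\ref{rationalisogeny} to cap $b$ when $a=0$; and for $a=2$, $b\geq 3$ it runs one further ad hoc orbit argument on the eight $x$-coordinates of the points of order $8$ to reach a contradiction with Theorem~\ref{najmanquadratic}. You replace all of this with a single structural fact: $\Aut(\Z/2^a\Z\times\Z/2^b\Z)$ is a $2$-group whenever $a\neq b$, so the field of definition $F=\Q(E(L)[2^\infty])$ has $2$-power degree dividing $n$, hence $[F:\Q]\leq 2$ since $4\nmid n$, and Theorem~\ref{najmanquadratic} (with Mazur's theorem when $F=\Q$) prunes the list; notably, this absorbs the paper's fiddliest subcase ($a=2$, $b\geq 3$) with no extra work. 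Your $a=b=2$ analysis via $\gcd(n,\lvert \mathrm{GL}_2(\Z/4\Z)\rvert)=\gcd(n,96)\leq 2$ plus the Weil pairing is identical to the paper's Case 3, and your $a=b=1$ step (the $2$-division cubic has splitting field of degree in $\{1,2,3,6\}$, and $3\nmid n$) also appears inside the paper's proof of Lemma~\ref{2torsionlemma4}. The group-theoretic input you flag as the main obstacle is true and both of your proposed justifications go through: by the Hillar--Rhea formula the odd part of $\lvert\Aut(H)\rvert$ is the product of the odd parts of $\lvert \mathrm{GL}_m(\mathbb{F}_2)\rvert$ over the multiplicities $m$ of the exponents, all trivial when the exponents are distinct; alternatively, for $1\leq a<b$ the characteristic subgroup $H[2^a]$ maps onto a single line of $H/2H$, so the image of $\Aut(H)$ in $\mathrm{GL}_2(\mathbb{F}_2)$ lands in an order-$2$ line stabilizer while the kernel of $\Aut(H)\to\Aut(H/2H)$ is a $2$-group (and $a=0$ is the cyclic case, of automorphism group order $2^{b-1}$). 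The trade-off: your route is more uniform and shorter, at the cost of importing one standard group-theoretic fact; the paper's route is elementary and self-contained, and it reuses Lemma~\ref{2torsionlemma4}, which the paper needs independently elsewhere (e.g., in Theorems~\ref{prime3cyc} and~\ref{prime3anti}).
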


\begin{proof}
Suppose $E(L)[2^\infty] \cong \mathbb{Z}/2^a\mathbb{Z} \times \mathbb{Z}/2^b\mathbb{Z}$ for some integers $0 \leq a \leq b$. We consider three cases depending on the value of $a$.

If $n$ is even, let $K$ be the unique quadratic subfield of $L$. Otherwise, define $K := \mathbb{Q}$.

From the Weil pairing, we know that $\mu_{2^a} \subseteq L$. Since $4 \nmid n$, this implies $a \leq 2$.

\textbf{Case 1: $a = 0$.} Then $E$ has a rational $2^b$-isogeny. By Theorem~\ref{rationalisogeny}, we must have $b \leq 4$, since $E$ cannot have a rational $32$-isogeny.

Furthermore, Lemma~\ref{2torsionlemma4} gives
\begin{equation*}
E(L)[2^\infty] = E(K)[2^\infty],
\end{equation*}
because $4 \nmid n$ suffices for this equality.

\textbf{Case 2: $a = 1$.} If $b > 1$, we may again apply Lemma~\ref{2torsionlemma4} to conclude
\begin{equation*}
E(L)[2^\infty] = E(K)[2^\infty].
\end{equation*}
Then, Theorem~\ref{najmanquadratic} implies $b \leq 3$.

\textbf{Case 3: $a = 2$.} In this case, the Galois group
\begin{equation*}
\Gal(\mathbb{Q}(E[4])/\mathbb{Q}) \hookrightarrow \mathrm{GL}_2(\mathbb{Z}/4\mathbb{Z}),
\end{equation*}
which has order $96$. Therefore,
$[\mathbb{Q}(E[4]) : \mathbb{Q}] \mid 96$.

Since $\mathbb{Q}(E[4]) \subseteq L$ and $[L:\mathbb{Q}] = n$, we also have
$[\mathbb{Q}(E[4]) : \mathbb{Q}] \mid n$.
As $\gcd(n, 96) \leq 2$, it follows that
$[\mathbb{Q}(E[4]) : \mathbb{Q}] \leq 2$,
so $\mathbb{Q}(E[4]) \subseteq K$.

Moreover, since $\mu_4 \subseteq L$ by the Weil pairing, we find $\mathbb{Q}(\sqrt{-1}) \subseteq L$, and therefore $K = \mathbb{Q}(\sqrt{-1})$. Since $\mathbb{Q}(E[4]) \ne \mathbb{Q}$, we conclude
\begin{equation*}
\mathbb{Q}(E[4]) = \mathbb{Q}(\sqrt{-1}) = K.
\end{equation*}

Now we claim that $b = 2$. Suppose, for contradiction, that $b \geq 3$. Then
\begin{equation*}
E(L)[8] \cong \mathbb{Z}/4\mathbb{Z} \times \mathbb{Z}/8\mathbb{Z}.
\end{equation*}
We will show that $E(L)[8] = E(K)[8]$, which contradicts Theorem~\ref{najmanquadratic}.

We already have $E(L)[4] = E(K)[4]$. Now consider the points of order $8$ in $E(L)[8]$: there are $16$ such points, corresponding to $8$ distinct $x$-coordinates. By analyzing the $\Gal(L/\mathbb{Q})$-orbit of these $x$-coordinates, we find that at least one such point must be defined over $K$. This point, combined with the $4$-torsion, generates $E(L)[8]$, hence
\begin{equation*}
E(L)[8] = E(K)[8],
\end{equation*}
a contradiction.

Therefore, $b = 2$, and we conclude that
\begin{equation*}
E(L)[2^\infty] = E(L)[4] = E(K)[4] = E(K)[2^\infty]
\end{equation*}
as desired. 
\end{proof}

\begin{lemma}\label{3torsionlemma}
     Let $L$ be a finite Galois extension of $\mathbb{Q}$ with $[L:\mathbb{Q}]=n$ for some positive integer $n$ not divisible by $3$ and $4$. Let $E/\mathbb{Q}$ be an elliptic curve. Then, $E(L)[2^\infty]$ can be the following:
     \begin{equation*}
        E(L)[2^\infty] \cong \begin{cases}
              \mathbb{Z} / 3^k \mathbb{Z} & \text{ with } 0\leq k\leq 2, \text{ or } \\
              \mathbb{Z} /3  \mathbb{Z} \times    \mathbb{Z} /3 \mathbb{Z} & \text{ only if } \mathbb{Q}(\sqrt{-3})\subset L.    
        \end{cases} 
     \end{equation*}
     Moreover, there is a square-free integer $d$ satisfying $\sqrt{d}\in L$ and $E(L)[3^\infty]=E(\Q(\sqrt{d}))[3^\infty]$. 
\end{lemma}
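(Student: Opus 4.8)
The plan is to follow the structure of Lemma~\ref{2torsionlemma34}: write $E(L)[3^\infty] \cong \mathbb{Z}/3^a\mathbb{Z} \times \mathbb{Z}/3^b\mathbb{Z}$ with $0 \leq a \leq b$, set $K$ to be the quadratic subfield of $L$ when $n$ is even and $K = \mathbb{Q}$ otherwise, and then bound $a$ and $b$ in turn. Since $E[3^a] \subseteq E(L)$, the Weil pairing gives $\mu_{3^a} \subseteq L$, so $\phi(3^a) = 2\cdot 3^{a-1}$ divides $n$; as $3 \nmid n$ this forces $a \leq 1$. The two remaining cases are $a=0$ (cyclic $3$-primary torsion) and $a=1$ (which will produce the $\mathbb{Z}/3\mathbb{Z}\times\mathbb{Z}/3\mathbb{Z}$ possibility and must be shown to go no further).

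In the case $a = 0$ we have $E(L)[3^b] \cong \mathbb{Z}/3^b\mathbb{Z}$, so Lemma~\ref{4pairinggeneralmore} applies with the odd prime power $q = 3^b$. Because $3 \nmid n$, we get $\gcd(n,\phi(3^b)) = \gcd(n,2\cdot 3^{b-1}) = \gcd(n,2) \in \{1,2\}$, and hence $E(L)[3^b] = E(\mathbb{Q}(\sqrt{d}))[3^b]$ for some square-free $d$ with $\sqrt{d}\in L$ (with $d=1$, i.e.\ the field $\mathbb{Q}$, when $n$ is odd). Since this cyclic group is realized over $\mathbb{Q}$ or over a quadratic field, Mazur's theorem and Theorem~\ref{najmanquadratic} both cap its $3$-part at $\mathbb{Z}/9\mathbb{Z}$, giving $b \leq 2$; the same equality, intersected with the inclusion $\mathbb{Q}(\sqrt d)\subseteq L$, yields $E(L)[3^\infty] = E(\mathbb{Q}(\sqrt d))[3^\infty]$, which is the ``moreover'' clause here.

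In the case $a = 1$ we have $E[3] = E(L)[3] \subseteq E(L)$, so $\mathbb{Q}(E[3]) \subseteq L$. Since $[\mathbb{Q}(E[3]):\mathbb{Q}]$ divides both $|GL_2(\mathbb{Z}/3\mathbb{Z})| = 48$ and $n$, and $\gcd(48,n) \leq 2$ by $3 \nmid n$ and $4 \nmid n$, while $\mu_3 \subseteq \mathbb{Q}(E[3])$ forces $\mathbb{Q}(\sqrt{-3}) \subseteq \mathbb{Q}(E[3])$, I conclude $\mathbb{Q}(E[3]) = \mathbb{Q}(\sqrt{-3}) =: K \subseteq L$; in particular $\mathbb{Q}(\sqrt{-3}) \subset L$, as the statement requires. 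To finish I would show $b=1$. The key point is that, since $E[3] \subseteq E(K)$, the image of $\Gal(K(E[3^b])/K) \hookrightarrow GL_2(\mathbb{Z}/3^b\mathbb{Z})$ lies in the kernel of reduction modulo $3$, which is a $3$-group. Thus for a generator $P$ of the $\mathbb{Z}/3^b\mathbb{Z}$-factor the degree $[K(P):K]$ is a power of $3$; but $K(P) \subseteq L$ makes it divide $[L:K] = n/2$, which is coprime to $3$, so $P \in E(K)$. Combined with $E[3] \subseteq E(K)$ this gives $E(L)[3^\infty] = E(K)[3^\infty] \cong \mathbb{Z}/3\mathbb{Z}\times\mathbb{Z}/3^b\mathbb{Z}$, and Theorem~\ref{najmanquadratic} allows such a group over a quadratic field only for $b=1$. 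Hence $b=1$ and $E(L)[3^\infty] = E(\mathbb{Q}(\sqrt{-3}))[3^\infty]$, completing both the classification and the ``moreover'' clause.

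I expect the case $a=1$ to be the main obstacle, specifically ruling out $\mathbb{Z}/3\mathbb{Z}\times\mathbb{Z}/9\mathbb{Z}$ and higher. The orbit-counting technique of Lemma~\ref{2torsionlemma4} would only produce \emph{some} point of order $9$ of degree $\leq 2$ over $\mathbb{Q}$, which need not be defined over the specific field $K=\mathbb{Q}(\sqrt{-3})$ carrying the full $3$-torsion, so it does not immediately let one invoke Theorem~\ref{najmanquadratic}. The device that overcomes this is the observation that the mod-$3$ reduction kernel in $GL_2(\mathbb{Z}/3^b\mathbb{Z})$ is a $3$-group: played against the coprimality $3 \nmid n$, it forces the entire $3$-primary torsion down onto $E(K)$ over the correct quadratic field, after which the bound follows from Theorem~\ref{najmanquadratic}.
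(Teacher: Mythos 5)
Your proposal is correct, and it follows the paper's skeleton for most of the argument: the same decomposition $E(L)[3^\infty]\cong\Z/3^a\Z\times\Z/3^b\Z$, the Weil-pairing bound $a\leq 1$, the use of Lemma~\ref{4pairinggeneralmore} plus Theorem~\ref{najmanquadratic} in the case $a=0$ (the paper inserts an intermediate isogeny bound $b\leq 3$ via Theorem~\ref{rationalisogeny}, which your direct application of Lemma~\ref{4pairinggeneralmore} with $q=3^b$ renders unnecessary), and the same $\mathrm{GL}_2(\Z/3\Z)$ order-$48$ computation pinning down $\Q(E[3])=\Q(\sqrt{-3})=K$ when $a=1$. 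Where you genuinely diverge is the decisive step ruling out $b\geq 2$: the paper runs a $\Gal(L/\Q)$-orbit count on the $9$ distinct $x$-coordinates of the order-$9$ points to produce a point of order $9$ in $E(K)$, whereas you observe that since $E[3]\subseteq E(K)$, the group $\Gal(K(E[3^b])/K)$ lands in the kernel of $\mathrm{GL}_2(\Z/3^b\Z)\to\mathrm{GL}_2(\Z/3\Z)$, a $3$-group, so $[K(P):K]$ is a power of $3$ dividing $[L:K]$, which is prime to $3$, forcing $P\in E(K)$. Your argument is cleaner, handles all $b\geq 2$ uniformly without counting partitions of orbit sizes, and generalizes readily (it is the same device the paper could have used for Lemma~\ref{2torsionlemma34}, Case 3).

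One remark on your closing paragraph: your stated worry about the orbit technique --- that it only yields a point of degree $\leq 2$ over $\Q$, possibly not defined over the specific field $K=\Q(\sqrt{-3})$ --- is actually unfounded. Since $[L:\Q]=n$ with $n\equiv 2\pmod 4$ in this case, the Sylow argument shows $L$ has a \emph{unique} quadratic subfield, which must be $K=\Q(\sqrt{-3})$ once $\sqrt{-3}\in L$; and $4\nmid n$ rules out $[\Q(P):\Q]=4$, so any point whose $x$-coordinate has an orbit of size $\leq 2$ automatically lies in $E(K)$. (One should also check that an orbit of size $\leq 2$ exists: orbit sizes divide $n$, hence avoid $3$ and $4$, and $9$ admits no partition into parts from $\{5,7,9,\dots\}$, so a part of size $1$ or $2$ is forced --- a point the paper leaves implicit.) So the paper's route is sound; your kernel-of-reduction argument simply replaces this combinatorial bookkeeping with a structural one.
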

\begin{proof}
Suppose $E(L)[3^\infty] \cong \mathbb{Z}/3^a\mathbb{Z} \times \mathbb{Z}/3^b\mathbb{Z}$ for some integers $0 \leq a \leq b$. We consider two cases depending on whether $n$ is even or odd.

If $n$ is even, let $K$ be the unique quadratic subfield of $L$. Otherwise, put $K= \mathbb{Q}$.

From the Weil pairing, we know that $\mu_{3^a} \subseteq L$. Since $3 \nmid n$, this implies $a \leq 1$.

\textbf{Case 1: $a = 0$.} In this case, $E$ admits a rational $3^b$-isogeny. By Theorem~\ref{rationalisogeny}, we must have $b \leq 3$, as there is no rational $81$-isogeny.

Furthermore, Lemma~\ref{4pairinggeneralmore} implies that $E(L)[3^\infty] = E(K)[3^\infty]$. Then, applying Theorem~\ref{najmanquadratic}, we rule out the possibility that $b = 3$. So the only remaining options are $b \leq 2$.

\textbf{Case 2: $a = 1$.} In this case, $\mathbb{Q}(E[3])$ is a Galois extension of $\mathbb{Q}$ with Galois group contained in $\mathrm{GL}_2(\mathbb{Z}/3\mathbb{Z})$, which has order $48$. Therefore,
\begin{equation*}
    [\mathbb{Q}(E[3]) : \mathbb{Q}] \mid 48.
\end{equation*}
Since $\mathbb{Q}(E[3]) \subseteq L$ and $[L:\mathbb{Q}] = n$, we also have $[\mathbb{Q}(E[3]) : \mathbb{Q}] \mid n$. But $\gcd(n, 48) \leq 2$ by assumption, so $[\mathbb{Q}(E[3]) : \mathbb{Q}] \leq 2$.

This shows that $\mathbb{Q}(E[3]) \subseteq K$. On the other hand, by the Weil pairing, $\mathbb{Q}(\sqrt{-3}) \subseteq L$, so in this case we must have $K = \mathbb{Q}(\sqrt{-3})$. Since $\mathbb{Q}(E[3])$ cannot be $\mathbb{Q}$, it follows that
\begin{equation*}
    \mathbb{Q}(E[3]) = \mathbb{Q}(\sqrt{-3}) = K.
\end{equation*}

Now we claim that $b = 1$. Suppose for contradiction that $b \geq 2$. Then
\begin{equation*}
    E(L)[9] \cong \mathbb{Z}/3\mathbb{Z} \times \mathbb{Z}/9\mathbb{Z}.
\end{equation*}
We will show that $E(L)[9] = E(K)[9]$, which contradicts Theorem~\ref{najmanquadratic}.

We have already established that $E(L)[3] = E(K)[3]$. Consider the points of order $9$ in $E(L)[9]$: there are 18 such points, which yield 9 distinct $x$-coordinates. By considering the action of $\mathrm{Gal}(L/\mathbb{Q})$ on these $x$-coordinates, we can find a point of order 9 whose $x$-coordinate is fixed under $\mathrm{Gal}(L/K)$. Hence, such a point lies in $E(K)[9]$.

Combining this point with the $3$-torsion, we obtain all of $E(L)[9]$ inside $E(K)[9]$, a contradiction. Thus, $b$ cannot be greater than 1.

We conclude that
\begin{equation*}
    E(L)[3^\infty] = E(L)[3] = E(K)[3] = E(K)[3^\infty],
\end{equation*}
as desired.
\end{proof}

\begin{lemma}\label{anticyclomain}
    Let $E/\Q$ be an elliptic curve. Let $K=\Q(\sqrt{-d})$ be an imaginary quadratic
    field where $d$ is a square-free positive integer. Let $p$ be an odd prime and
    $K_{anti}$ be the anti-cyclotomic $\Z_p$-extension of $K$. If $E(K_{anti})[N]\cong \Z/N\Z$ for some odd positive integer $N$, then $E(K_{anti})[N] = E(K)[N]$.
\end{lemma}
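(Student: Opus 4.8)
The plan is to use the pro-dihedral structure of $G := \Gal(K_{anti}/\Q)$ to show that the cyclic Galois module $E(K_{anti})[N]$ is fixed by the whole of $\Gal(K_{anti}/K)$. First I would let $\tau \in G$ denote complex conjugation, which restricts to the nontrivial element of $\Gal(K/\Q)$ since $K$ is imaginary quadratic, and let $\gamma$ be a topological generator of $\Gal(K_{anti}/K) \cong \Z_p$. Since $G$ is pro-dihedral, these satisfy the inversion relation $\tau\gamma\tau^{-1} = \gamma^{-1}$.

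Because $E$ is defined over $\Q$ and $K_{anti}/\Q$ is Galois, $G$ acts on $E(K_{anti})[N]$, and as this module is cyclic the action is given by a continuous homomorphism $\rho \colon G \to \Aut(\Z/N\Z) \cong (\Z/N\Z)^\times$. The two structural facts I would isolate are that the target is abelian, and that the image $\rho\bigl(\Gal(K_{anti}/K)\bigr)$, being a finite continuous quotient of the pro-$p$ group $\Z_p$, is a $p$-group and therefore of odd order (here it is essential that $p$ is odd).

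The key step then combines these facts. Applying $\rho$ to the inversion relation and using commutativity of $(\Z/N\Z)^\times$ yields
\[
\rho(\gamma) = \rho(\tau\gamma\tau^{-1}) = \rho(\tau)\rho(\gamma)\rho(\tau)^{-1} = \rho(\gamma)^{-1},
\]
so $\rho(\gamma)^2 = 1$. As $\rho(\gamma)$ has odd order by the previous paragraph, this forces $\rho(\gamma) = 1$. Since $\gamma$ topologically generates $\Gal(K_{anti}/K)$ and $\rho$ is continuous, the entire group $\Gal(K_{anti}/K)$ acts trivially on $E(K_{anti})[N]$.

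Consequently every point of $E(K_{anti})[N]$ is fixed by $\Gal(K_{anti}/K)$ and hence is $K$-rational, giving $E(K_{anti})[N] \subseteq E(K)[N]$; the reverse containment is immediate, so equality holds. I expect the only delicate point to be the bookkeeping that $\rho(\gamma)^2 = 1$ and the odd-order constraint can coexist only when $\rho(\gamma)$ is trivial, which rests squarely on $p$ being odd and on the inversion action of $\tau$. Notably, the oddness of $N$ is not actually used in this argument; the decisive inputs are the oddness of $p$ and the pro-dihedral structure of $\Gal(K_{anti}/\Q)$.
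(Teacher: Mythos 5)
Your proof is correct, and while it rests on the same structural input as the paper --- Brink's result that complex conjugation acts on $\Gal(K_{anti}/K)$ by inversion --- the execution is genuinely different and in one respect stronger. The paper argues pointwise: it reduces to odd prime powers $q$ exactly dividing $N$, picks a point $P=(x,y)$ of order $q$, uses $(\tau\sigma)^2=1$ to get $P^{\tau\sigma}=aP$ with $a^2\equiv 1 \pmod{q}$, concludes $a\equiv \pm 1 \pmod{q}$ (this is precisely where the oddness of $N$ enters: for $q=2^k$ with $k\geq 3$ there are square roots of unity other than $\pm 1$), deduces that $x$ is fixed by $\Gal(K_{anti}/K)$, and finally invokes the absence of quartic subfields of $K_{anti}$ (i.e.\ the oddness of $p$) to force $y\in K$ and hence $P\in E(K)$. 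You instead package the entire action into a character $\rho\colon \Gal(K_{anti}/\Q)\to (\Z/N\Z)^\times$ and kill $\rho(\gamma)$ in one stroke: the inversion relation plus commutativity of the target gives $\rho(\gamma)^2=1$, while the image of the pro-$p$ group $\Gal(K_{anti}/K)$ is a finite $p$-group of odd order, so $\rho(\gamma)=1$ and the whole of $\Gal(K_{anti}/K)$ acts trivially. This bypasses the reduction to prime powers, the sign ambiguity $P^{\tau\sigma}=\pm P$, and the coordinate and field-degree bookkeeping, and --- as you correctly observe --- it proves slightly more: only the cyclicity of $E(K_{anti})[N]$ and the oddness of $p$ are used, so the hypothesis that $N$ is odd is superfluous in your version, whereas the paper's argument genuinely needs it at the step $a\equiv\pm 1\pmod{q}$. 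What the paper's pointwise approach buys in exchange is that it is self-contained at the level of coordinates and never needs the continuity of the Galois action, which is the one small point your proof must (and does) address: since $E(K_{anti})[N]$ is finite, its points are defined over a finite subextension, so the kernel of $\rho$ restricted to $\Gal(K_{anti}/K)\cong\Z_p$ is open and the image is indeed a finite cyclic $p$-group.
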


\begin{proof}
    Showing that the assumption holds for a prime power $q$ such that $\gcd(q,N/q)=1$ will be sufficient.

    Let $\tau$ denote complex conjugation. By Main Lemma 1\textit{(ii)} from \cite{anticyclo}, we know that $\tau$ operates on $\Gal(K_{anti}/K)$ by inversion, that is
    \begin{equation*}
        \tau^{-1}\sigma\tau=\sigma^{-1},\quad \text{for any $\sigma\in \Gal(K_{anti}/K)$}.
    \end{equation*}
    Since $\tau=\tau^{-1}$, we can observe that $(\tau\sigma)^2=1$, so $\tau\sigma$ has order at most 2 where $\tau\sigma\in \Gal(K_{anti}/\Q)$.

    Now, let $P=(x,y)\in E(K_{anti})[q]$ be a point of order $q$. Observe that
    \begin{equation*}
        P^{\tau\sigma}=aP,\quad \text{where $a\in(\Z/q\Z)^\times$}.
    \end{equation*}
    Then we have
    \begin{equation*}
        P=P^{(\tau\sigma)^2}=a^2P
    \end{equation*}
    which implies that $a\equiv \pm1\pmod{q}$. So we find 
    \begin{equation*}
        P^{\tau\sigma}=\pm P=(x,\pm y).
    \end{equation*}
    
    Observe that $x^{\tau\sigma}=x$ holds for any $\sigma\in \Gal(K_{anti}/K)$, so if $\sigma=id$ then $x^\tau=x$. So we have
    \begin{equation*}
        x^\sigma=x^\tau=x, \quad  \text{for any $\sigma\in \Gal(K_{anti}/K)$}.
    \end{equation*}
    Hence $x\in K$.

    Finally, let the elliptic curve $E$ has the equation $E:y^2=x^3+Ax+B$ for some $A,B\in \Q$. Then $y$ is algebraic over $\Q[x]$ with degree either 1 or 2. Note that $K_{anti}$ has no quartic subfield. This ensures that $y\in K$. Hence $P\in E(K)[q]$.
\end{proof}

\begin{lemma}\label{rootsofunityinZpextensions}
    Let $K=\Q(\sqrt{d})$ be a quadratic
    field where $d$ is a square-free integer. Let $p>3$ be a prime. Let $L$ be a $\Z_p$-extension of $K$. Let $n$ be a positive integer satisfying $\mu_n\subset L$. Then, $\mu_n\subset K$ holds.
\end{lemma}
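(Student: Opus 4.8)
The plan is to work with the compositum $F := K(\mu_n)$. Since $\mu_n \subset L$ we have $F \subseteq L$, and because $L/K$ is a $\Z_p$-extension, every finite subextension of $L/K$ has $p$-power degree; hence $[F:K]=p^j$ for some $j\geq 0$. Restriction gives an embedding $\Gal(F/K)\hookrightarrow \Gal(\Q(\mu_n)/\Q)\cong(\Z/n\Z)^\times$, so $p^j \mid \phi(n)$. Since $\mu_n\subset K$ is equivalent to $F=K$, i.e. to $j=0$, the whole proof reduces to showing $j=0$. Two classical inputs will drive the argument: the degree formula $[\Q(\mu_m):\Q]=\phi(m)$, and the standard fact that any $\Z_p$-extension of a number field is unramified outside the primes above $p$, so in particular $F/K$ is unramified outside $p$.

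First I would show $p\nmid n$. If $p\mid n$, then $\mu_p\subseteq\mu_n\subset L$, so $[K(\mu_p):K]$ is simultaneously a power of $p$ and a divisor of $[\Q(\mu_p):\Q]=p-1$; as $\gcd(p,p-1)=1$, this forces $K(\mu_p)=K$, whence $p-1=[\Q(\mu_p):\Q]\leq[K:\Q]=2$, contradicting $p>3$. Thus every prime $\ell\mid n$ satisfies $\ell\neq p$.

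Next, for each prime $\ell\mid n$ I would bound the ramification of $\ell$ in $F$. Writing $c=v_\ell(n)$, the subfield $\Q(\mu_{\ell^c})\subseteq F$ is totally ramified at $\ell$ with index $\phi(\ell^c)$, so $\phi(\ell^c)$ divides the ramification index $e(\mathfrak{L}/\ell)$ for any prime $\mathfrak{L}\mid\ell$ of $F$. Splitting this index through $K$ via $\mathfrak{q}=\mathfrak{L}\cap K$, the fact that $F/K$ is unramified at $\mathfrak{q}$ (because $\ell\neq p$) gives $e(\mathfrak{L}/\mathfrak{q})=1$, while the quadratic layer contributes $e(\mathfrak{q}/\ell)\leq 2$; hence $e(\mathfrak{L}/\ell)\leq 2$ and therefore $\phi(\ell^c)\leq 2$. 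This forces $\ell\in\{2,3\}$ with $v_2(n)\leq 2$ and $v_3(n)\leq 1$, so $n\mid 12$.

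Finally, for $n\mid 12$ one has $\phi(n)\mid 4$, which is prime to $p$; combined with $p^j\mid\phi(n)$ this yields $j=0$, i.e. $F=K$ and $\mu_n\subset K$, as desired. I expect the main obstacle to be the third step: it requires invoking (and ideally citing) the ramification theory behind "unramified outside $p$," and then carefully tracking ramification indices through the two towers $\Q\subseteq\Q(\mu_{\ell^c})\subseteq F$ and $\Q\subseteq K\subseteq F$ to extract the inequality $\phi(\ell^c)\leq 2$. The degree bookkeeping in the other steps is routine by comparison.
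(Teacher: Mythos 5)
Your proof is correct, and although it rests on the same two pillars as the paper's argument --- the $p$-power degree of every finite layer of $L/K$ and Iwasawa's theorem that $\Z_p$-extensions are unramified outside $p$ --- it is organized along a genuinely different line. The paper proceeds by cases: it first disposes of $n\in\{1,2,3,4,6\}$ and of $n=8,9$ using the facts that $L$ has a unique quadratic subfield and no quartic or cubic subfield, and then, for $n$ with a prime factor $q>3$, derives a contradiction from $\Q(\zeta_q)\subseteq L$: the degree $(q-1)/2$ over $K$ must be a $p$-power, forcing $p\mid q-1$ and hence $q\neq p$, while $q$ ramifies in $\Q(\zeta_q)/K$ and hence in $L/K$. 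You instead run the ramification argument uniformly over \emph{every} prime $\ell\mid n$ (after first ruling out $\ell=p$ by a degree count, where the paper gets $q\neq p$ from $p\mid q-1$), obtaining $\phi(\ell^c)\leq e(\mathfrak{L}/\ell)\leq 2$ by splitting the ramification index through the quadratic layer $K$, which yields $n\mid 12$ in one stroke; the final count $p^j\mid\phi(n)\mid 4$ with $p>3$ then closes the argument. Your uniform treatment buys a small but real advantage: the paper's assertion that any remaining $n$ ``must have a prime factor $q\neq 2,3$'' overlooks values such as $n=12,16,18,24,27$, which in the paper's scheme require a further (easy, but unstated) reduction to the cases $\mu_8$, $\mu_9$, or $\mu_{12}$, whereas your bound $\phi(\ell^c)\leq 2$ covers all of them automatically. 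One cosmetic remark: for $n$ such as $12$, your conclusion $F=K$ would contradict $[\Q(\mu_{12}):\Q]=4>2$, which simply shows the hypothesis $\mu_{12}\subset L$ is unsatisfiable, so the implication claimed by the lemma holds vacuously there; your logic is sound, but it would be worth a sentence to flag this for the reader.
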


\begin{proof}
    Assume that $\mu_n \subset L$ for some positive integer $n$. It is clear that $n = 1, 2$ always work. We know that $K$ is the unique quadratic subfield of $L$. If $n = 4$, then $K = \Q(\sqrt{-1})$ must hold. If $n = 3, 6$, then $K = \Q(\sqrt{-3})$ must hold. In any case, it is clear that $n = 8$ or $n = 9$ does not work because $L$ does not have a quartic or cubic subfield. 

    Assume that $n \neq 1, 2, 3, 4, 6$. Then $n$ must have a prime factor $q \neq 2, 3$. In this case, $d = q$ and $q \equiv 1 \pmod{4}$, or $d = -q$ and $q \equiv 3 \pmod{4}$ must hold. Moreover, $\Q(\zeta_q)$ is a subfield of $L$, with extension degree $(q-1)/2$ over $K$. Then, $(q-1)/2$ must be a power of $p$, since $L$ is a $\Z_p$-extension of $K$. Since $q > 3$, we can see that $p$ must divide $q - 1$, therefore $p < q$. We know that $q$ is ramified in the extension $\Q(\zeta_q)/K$, thus $q$ is ramified in $L/K$. In Iwasawa \cite{iwasawa}, it is shown that any $\Z_p$-extension of $K$ is unramified outside $p$. Thus, we get the desired contradiction.
\end{proof}

Now, we proceed to the proof of our main theorem. 

\begin{proof}[Proof of Theorem \ref{maintheorem}]

    In Lemma \ref{rootsofunityinZpextensions}, we have shown that any $\Z_p$-extension $L$ contains only finitely many roots of unity. Then, by Lemma \ref{finitetorsion}, we know that $E(L)_{\text{tors}}$ is finite. Therefore, there exist positive integers $m, n$ satisfying $E(L)_{\text{tors}} \cong \Z/n\Z \times \Z/mn\Z$. By the Weil pairing, this would imply $\mu_n \subset L$. Again, by Lemma \ref{rootsofunityinZpextensions}, we can see that $n = 1, 2, 3, 4, 6$ must hold. On the other hand, by Lemma \ref{n-isogeny}, we can see that the elliptic curve $E$ has a rational $m$-isogeny. Using Theorem \ref{rationalisogeny}, we can conclude that $m \leq 19$ or $m \in \{21, 25, 27, 37, 43, 67, 163\}$. As a result, if $E(L)[q]$ is not trivial for a prime $q$, then $q$ must be one of the primes in the set $\{2, 3, 5, 7, 11, 13, 17, 19, 37, 43, 67, 163\}$.

    If we can show that $E(L)[q^\infty] = E(K)[q^\infty]$ for the primes in this list, then we are done. We are dealing with an infinite extension, but if we can get the same result for all finite subfields of $L$, then we would achieve our goal. It is clear that any finite subfield of $L$ has the extension degree $p^a$ or $2 \cdot p^a$ for some non-negative integer $a$. Such extension degrees are not divisible by $3$ and $4$ due to our assumption $p > 5$. Then, by Lemma \ref{2torsionlemma34}, we obtain $E(L)[2^\infty] = E(K)[2^\infty]$, and by Lemma \ref{3torsionlemma}, we obtain $E(L)[3^\infty] = E(K)[3^\infty]$.

    For any prime $q > 3$, if $E(L)[q]$ is not trivial, then $E(L)[q] \cong \Z/q\Z$ must hold, otherwise we would have $\mu_q \subset L$ due to the Weil pairing, and we already discussed that this is not possible. Moreover, if $E(L)[q^\infty] \cong \Z/q^k\Z$ for some $k > 1$, then $q = 5$ must hold because it is the only case where $E$ can have a rational $q^2$-isogeny for some prime $q > 3$, due to Theorem \ref{rationalisogeny}. For $q = 5$, we can see that the extension degree of any subfield of $L$ is not divisible by $4$ and $5$ due to our assumption $p > 5$. Thus, we can use Lemma \ref{4pairinggeneralmore} to obtain $E(L)[5^\infty] = E(K)[5^\infty]$.

    If any prime $q > 5$ satisfies $E(L)[q] \cong \Z/q\Z$, then we have $E(L)[q^\infty] = E(L)[q]$, so let us show that $E(L)[q] = E(K)[q]$. Let us look at all the possible prime factors of $q - 1$ for the primes $q > 5$ in the set we have given above. It is easy to see that the set of prime factors of $q - 1$ is equal to $\{2, 3, 5, 7, 11\}$. Then, in the case $p > 11$, we are done by using Lemma \ref{4pairinggeneralmore} because we get $\gcd(2 \cdot p^a, q - 1) = 2$ for all $p > 11$ and $q \in \{7, 11, 13, 17, 19, 37, 43, 67, 163\}$. This means that our proof is complete for all primes $p > 11$. 

    Now, let us consider the cases $p = 7$ and $p = 11$ to finish our proof.

    We have shown that $E(L)_{\text{tors}} = E(K)_{\text{tors}}$ when $p > 11$. By combining our result with the classification of torsion subgroups over quadratic fields given in Theorem \ref{najmanquadratic}, we have achieved the classification of $E(L)_{\text{tors}}$.

    Now, we will deal with the cases $p = 7$ and $p = 11$. In these two cases, the proof requires an extra step. Problems arise only in our usage of Lemma \ref{4pairinggeneralmore}. In the $p = 7$ case, we cannot use it to obtain $E(L)[43] = E(K)[43]$, so we need to handle it in another way. The same holds in the $p = 11$ case, where we cannot use the lemma to obtain $E(L)[67] = E(K)[67]$.

    Let $p = 7$ and let $L$ be a $\Z_p$-extension of $K$. Assume that $E(L)[43] \cong \Z/43\Z$. Then $E$ has a rational $43$-isogeny by Lemma \ref{n-isogeny}. Referring to Table 1 of \cite{Omer}, we can see that this is only possible when $E$ is the elliptic curve \lmfdbec{1849}{a}{1}, and the field of definition of the rational $43$-isogeny is $\Q(\zeta_{43})^+$. Clearly, this field cannot be contained in any $\Z_p$ extension of a quadratic field because its extension degree over $\Q$ is equal to $21$.

    Similarly, let $p = 11$ and let $L$ be a $\Z_p$-extension of $K$. Assume that $E(L)[67] \cong \Z/67\Z$. Then $E$ has a rational $67$-isogeny by Lemma \ref{n-isogeny}. Referring to Table 1 of \cite{Omer}, we can see that this is only possible when $E$ is the elliptic curve \lmfdbec{4489}{a}{1}, and the field of definition of the rational $67$-isogeny is $\Q(\zeta_{67})^+$. Clearly, this field cannot be contained in any $\Z_p$ extension of a quadratic field because its extension degree over $\Q$ is equal to $33$.

    With these last two cases, our proof is now complete.
\end{proof}

\section{Exceptional Cases}\label{edgecases}

In this section, we discuss the exceptional cases that were omitted in the statement of Theorem \ref{maintheorem}. Specifically, we focus on the cases where $p = 3$ and $p = 5$, presenting our results and insights regarding these cases.

\begin{theorem}\label{prime5complete}
Let $E/\Q$ be an elliptic curve. Let $K$ be a quadratic field. Let $L$ be a $\Z_5$-extension of $K$. Then, either $E(L)_{\text{tors}}\cong \Z/25\Z$ or
$E(L)_{\text{tors}}= E(K)_{\text{tors}}$. Moreover, if $K$ is an imaginary quadratic field, $E(K_{anti})_{\text{tors}}\not\cong \Z/25\Z$.     
\end{theorem}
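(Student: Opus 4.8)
The plan is to rerun the proof of Theorem~\ref{maintheorem} with $p=5$ and isolate the two primes where the argument genuinely changes. Exactly as there, Lemma~\ref{rootsofunityinZpextensions} and Lemma~\ref{finitetorsion} make $E(L)_{\text{tors}}$ finite and force $E(L)_{\text{tors}}\cong\Z/a\Z\times\Z/b\Z$ with $a\mid b$ and $\mu_a\subset L$, so $a\in\{1,2,3,4,6\}$; Lemma~\ref{n-isogeny} and Theorem~\ref{rationalisogeny} give a rational $(b/a)$-isogeny, so the only primes $q$ with $E(L)[q]\neq 0$ lie in $\{2,3,5,7,11,13,17,19,37,43,67,163\}$. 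Every finite subfield of $L$ has degree $5^j$ or $2\cdot 5^j$, divisible by neither $3$ nor $4$, so Lemmas~\ref{2torsionlemma34} and~\ref{3torsionlemma} give $E(L)[2^\infty]=E(K)[2^\infty]$ and $E(L)[3^\infty]=E(K)[3^\infty]$ word for word. For $q\in\{7,13,17,19,37,43,67,163\}$ one has $5\nmid q-1$, hence $\gcd(2\cdot 5^j,q-1)=2$, and Lemma~\ref{4pairinggeneralmore} yields $E(L)[q]=E(K)[q]$ (the square-free $d$ it produces has $\sqrt d\in L$, so $\Q(\sqrt d)\in\{\Q,K\}$). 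This leaves precisely $q=5$ and $q=11$, the two cases where $5\mid\phi(25)$ respectively $5\mid\phi(11)$ break the hypothesis of Lemma~\ref{4pairinggeneralmore}.

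For $q=5$, since $\mu_5\not\subset L$ the group $E(L)[5^\infty]$ is cyclic, and as there is no rational $125$-isogeny it lies in $\{0,\Z/5\Z,\Z/25\Z\}$; moreover $\gcd(2\cdot 5^j,\phi(5))=2$ still gives $E(L)[5]=E(K)[5]$, so only the passage from order $5$ to order $25$ can occur, and since Theorem~\ref{najmanquadratic} shows $E(K)[5^\infty]$ is at most $\Z/5\Z$ this is a genuine growth. For $q=11$, if $E(L)[11]\cong\Z/11\Z$ then the order-$11$ points form a $\Gal(\overline{\Q}/\Q)$-stable line $\langle P\rangle$, the kernel of a rational $11$-isogeny, with character $\chi\colon\Gal(\overline{\Q}/\Q)\to\mathbb F_{11}^\times$ and field of definition $\Q(P)=\overline{\Q}^{\ker\chi}\subseteq L$. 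The degree bound used for $43,67$ fails here, since $[\Q(\zeta_{11})^+:\Q]=5$ is an admissible subfield degree of a $\Z_5$-extension, so I would argue by ramification instead. First, $\chi$ cannot have order $1$ or $2$: those would place a point of order $11$ over $\Q$ or over a quadratic field, contradicting Mazur and Theorem~\ref{najmanquadratic}; hence $5\mid\mathrm{ord}(\chi)$. The rational $j$-invariants admitting an $11$-isogeny give curves of conductor a power of $11$ (Table~1 of \cite{Omer}), so for these $\chi$ is unramified outside $11$ and $\chi|_{I_{11}}$ carries the full order of $\chi$; quadratic twisting alters only the order-$2$ part, so in all cases $11$ ramifies with index $5$ in the degree-$5$ subfield of $\Q(P)$. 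But $L/K$ is unramified outside $5$ by \cite{iwasawa} and $[K:\Q]=2$, so $11$ has ramification index at most $2$ in $L/\Q$; since $5\nmid 2$ this is a contradiction. Therefore $E(L)[11]=0=E(K)[11]$, and away from $q=5$ we conclude $E(L)_{\text{tors}}=E(K)_{\text{tors}}$.

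It remains to show that in the growth case $E(L)_{\text{tors}}$ is exactly $\Z/25\Z$. Writing $E(L)_{\text{tors}}\cong\Z/a\Z\times\Z/b\Z$ with $a\mid b$, the $5$-part being cyclic of order $25$ forces $5\nmid a$ and the $5$-part of $b$ to equal $25$, so $25\mid b/a$. As $25$ is the only admissible degree (allowed by Theorem~\ref{rationalisogeny}) divisible by $25$, Lemma~\ref{n-isogeny} gives $b/a=25$, hence $b=25a$; in particular if $a$ is even then $E(L)[2]\cong(\Z/2\Z)^2$, and if $3\mid a$ then $E(L)[3]\cong(\Z/3\Z)^2$. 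If $a$ were even, then $E(L)[2]=E(K)[2]\cong(\Z/2\Z)^2$ over the quadratic field $K$ yields a $\Q$-rational $2$-torsion point and thus a rational $2$-isogeny; composing with the rational $25$-isogeny produces a rational $50$-isogeny, impossible by Theorem~\ref{rationalisogeny}. If $3\mid a$, then $E(K)[3]\cong(\Z/3\Z)^2$ forces $\Q(E[3])=\Q(\sqrt{-3})=K$, and the order-$2$ action on $E[3]$, being a nontrivial involution in $\mathrm{GL}_2(\mathbb F_3)$, has an eigenline, hence a rational $3$-isogeny and then a rational $75$-isogeny, again impossible. Since $a\in\{1,2,3,4,6\}$, the only survivor is $a=1$, i.e.\ $E(L)_{\text{tors}}\cong\Z/25\Z$. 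Together with the non-growth case this proves the dichotomy.

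Finally, the anti-cyclotomic statement is immediate. If $E(K_{anti})_{\text{tors}}\cong\Z/25\Z$ then $E(K_{anti})[25]\cong\Z/25\Z$ with $25$ odd, so Lemma~\ref{anticyclomain} gives $E(K_{anti})[25]=E(K)[25]$; but $E(K)[25]\neq\Z/25\Z$ because by Theorem~\ref{najmanquadratic} the $5$-part of the torsion of a rational curve over a quadratic field never exceeds $\Z/5\Z$, a contradiction. I expect the only real obstacle to be the $q=11$ step: unlike $43$ and $67$, the field of definition has an admissible degree over $\Q$, so the elimination must run through ramification and Iwasawa's unramified-outside-$p$ property, and one must check that the order-$5$ ramification at $11$ persists under quadratic twisting rather than only for the conductor-$121$ models recorded in the table. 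The isogeny-composition argument pinning down $\Z/25\Z$ and the anti-cyclotomic deduction are, by contrast, routine.
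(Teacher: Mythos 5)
Your proposal is correct and follows essentially the same route as the paper: both rerun the proof of Theorem~\ref{maintheorem} with $p=5$, eliminate $q=11$ by playing the ramification of $11$ in the quintic field of definition of the $11$-torsion against Iwasawa's unramified-outside-$5$ property of $L/K$, pin the growth case down to exactly $\Z/25\Z$ by composing the rational $25$-isogeny with would-be rational $2$- and $3$-isogenies (using the same two ingredients: an order-$2$ subgroup of $\mathrm{GL}_2(\Z/3\Z)$ fixes a line, and full $2$-torsion over a quadratic field forces a rational root of the $2$-division cubic), and deduce the anti-cyclotomic claim from Lemma~\ref{anticyclomain} together with Theorem~\ref{najmanquadratic}. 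Your one substantive refinement is that you verify, via the isogeny character, that the order-$5$ ramification at $11$ persists under quadratic twisting, a point the paper handles only implicitly by citing the field of definition $\Q(\zeta_{11})^{+}$ from Table~1 of \cite{Omer} for the three conductor-$121$ curves.
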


\begin{proof}
    
The proof is very similar to the proof of Theorem \ref{maintheorem}. The only problem arises in our usage of Lemma \ref{4pairinggeneralmore}, because now we cannot use it to obtain $E(L)[11] = E(K)[11]$ or $E(L)[25] = E(K)[25]$. In order to deal with $11$-torsion, we will use isogenies. It is clear that $\zeta_{11} \not\in L$, so $E(L)$ cannot contain full $11$-torsion. Also, $E(L)[11^2] \cong \Z/11^2\Z$ is not possible, because $E$ cannot have a rational $121$-isogeny. Therefore, either $E(L)[11]$ is trivial or $E(L)[11] \cong \Z/11\Z$. If it is not trivial, then $E$ must have a rational $11$-isogeny. By using Table 1 of \cite{Omer}, we can see that this is only possible when $E$ is one of the elliptic curves \lmfdbec{121}{c}{2}, \lmfdbec{121}{b}{1}, or \lmfdbec{121}{a}{2}. In all three cases, the field of definition of the $11$-torsion is the field $\Q(\zeta_{11})^+$. Even though this is a quintic field, it cannot be contained in a $\Z_5$-extension of a quadratic field because $11$ is a ramified prime in $\Q(\zeta_{11})^+$, and the extension $L/K$ should be unramified outside $5$ due to Iwasawa \cite{iwasawa}. Therefore, $E(L)[11]$ must be trivial.

The other case is not as easy to handle. It is clear that $L$ does not contain $\zeta_5$, so $E(L)$ cannot contain full $5$-torsion. Moreover, $E(L)[5^\infty] \cong \Z/5^k\Z$ is not possible for $k \geq 3$, because $E$ cannot have a rational $125$-isogeny. If $E(L)[5]$ is trivial, there is no problem. If $E(L)[5^\infty] \cong \Z/5\Z$, we can still use Lemma \ref{4pairinggeneralmore} to obtain $E(L)[5^\infty] = E(K)[5^\infty]$. The problem arises when $E(L)[5^\infty] \cong \Z/25\Z$. In this case, $E$ must have a rational $25$-isogeny, but this is true for infinitely many elliptic curves $E/\Q$, because the modular curve $X_0(25)$ has genus $0$, unlike the genus $1$ modular curve $X_0(11)$.

Now, the special properties of the anti-cyclotomic extension $K_{anti}$ play an important role, and we can use Lemma \ref{anticyclomain} to show that if $E(K_{anti})[25] \cong \Z/25\Z$, then we have $E(K)[25] \cong \Z/25\Z$, but this is not possible due to Theorem \ref{najmanquadratic}.

In the case where $E(L)[5^\infty] \cong \Z/25\Z$, we can prove that $E(L)[q]$ is trivial for all other primes $q \neq 5$. The elliptic curve $E$ must have a rational $25$-isogeny, and therefore it cannot have a rational $q$-isogeny for any other prime $q$, as per Theorem \ref{rationalisogeny}. If $q > 5$ is a prime and $E(L)[q]$ is not trivial, then $E(L)[q] \cong \Z/q\Z$, because it cannot have full $q$-torsion due to the Weil pairing. In this case, $E$ has a rational $q$-isogeny, which is not possible due to our previous discussion.

Consider the $q = 3$ case, and observe that if $E(L)[3]$ is not trivial and $E$ does not have a rational $3$-isogeny, the only possibility is $E[3^\infty] \cong \Z/3\Z \times \Z/3\Z$. In this case, by using the same argument as in the proof of Lemma \ref{3torsionlemma}, we can see that $\Gal(\Q(E[3])/\Q)$ has order $2$, and it is easy to see that any subgroup of $GL_2(\Z/3\Z)$ of order $2$ must fix a subgroup of $E[3]$ of order $3$. Thus, $E$ must have a rational $3$-isogeny, which contradicts $E$ having a rational $25$-isogeny.

Consider the $q = 2$ case, and observe that if $E(L)[2]$ is not trivial and $E$ does not have a rational $2$-isogeny, then $E(L)$ must have full $2$-torsion. Since $L$ does not contain a cubic subfield, at least one of the points of order $2$ is a rational point, and hence $E$ has a rational $2$-isogeny, which contradicts $E$ having a rational $25$-isogeny. 

Thus, we have shown that if $E(L)[25] \cong \Z/25\Z$, then $E(L)_{\text{tors}} \cong \Z/25\Z$. In all other cases, we have $E(L)_{\text{tors}} = E(K)_{\text{tors}}$.
\end{proof}

\begin{remark}
  At this point, we do not know if $\Z/25\Z$ is realized as $E(L)_{\text{tors}}$ for any $Z_5$-extension $L$ of $K$. Rational elliptic curves with rational $25$-isogeny are classified as a $1$-parameter family and listed on the LMFDB database \cite{lmfdb}, ordered by conductor. Indeed, $\Z/25\Z$ can be a torsion subgroup over some quintic fields or fields of degree 10. In the LMFDB database, we checked all the elliptic curves with $25$-isogeny and conductor less than $1000$, and none of them has $25$-torsion over any $L$. It may or may not be realized, but our methods are not effective at proving a negative answer.
\end{remark}

\begin{theorem}\label{prime3cyc}
    Let $E/\Q$ be an elliptic curve. Let $K=\Q(\sqrt{d})$ be a quadratic field where $d$ is a square-free integer. Let $K_{cyc}$ be the cyclotomic $\Z_3$-extension of $K$. 
    Then,  $E(K_{cyc})_{\text{tors}}$ is one of the following groups:
     $$E(K_{cyc})_{\text{tors}} \simeq 
  \begin{cases}
      \mathbb{Z} / N \mathbb{Z} & \text{ with } 1\leq N\leq 10, \text{ or } N=12,13,15,16,18,21,27,  \\
           \mathbb{Z} / 2 \mathbb{Z} \times      \mathbb{Z} /2 N \mathbb{Z} & \text{ with } 1\leq N\leq 7,  \text{ or } N=9 \\
             \mathbb{Z} /3  \mathbb{Z}  \times    \mathbb{Z} /3 N \mathbb{Z} & \text{ with } N=1,2,3, \text{ only if } K=\mathbb{Q}(\sqrt{-3}) \text{ or}\\
              \mathbb{Z} /4  \mathbb{Z} \times    \mathbb{Z} /4 \mathbb{Z} & \text{ only if } K=\mathbb{Q}(\sqrt{-1}).   
  \end{cases}  $$ 
\end{theorem}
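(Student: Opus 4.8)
The plan is to exploit the one feature that distinguishes the cyclotomic tower from the anti‑cyclotomic one, namely that $K_{cyc}/\Q$ is \emph{abelian}, and then to prune the abelian classification of Theorem~\ref{allgroupslist} group by group. First I would record the structural input: the cyclotomic $\Z_3$-extension $\Q_{\infty,3}$ of $\Q$ is totally real and has no quadratic subfield, so $K\cap\Q_{\infty,3}=\Q$ and $K_{cyc}=K\cdot\Q_{\infty,3}$ has $\Gal(K_{cyc}/\Q)\cong\Z/2\Z\times\Z_3$. Thus $K_{cyc}$ is abelian over $\Q$, its finite layers have degree $3^a$ or $2\cdot3^a$ (never divisible by $4$), and each finite layer containing $K$ has $K$ as its unique quadratic subfield. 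Note that the Lemmas~\ref{2torsionlemma34} and~\ref{3torsionlemma} used for $p>5$ are now unavailable, since $3\mid[F:\Q]$ for the nontrivial layers $F$. By Theorem~\ref{allgroupslist}, $E(K_{cyc})_{\text{tors}}$ already lies in Chou's finite list, so the whole content is to delete $\Z/N\Z$ for $N\in\{11,14,17,19,25,37,43,67,163\}$, the group $\Z/2\Z\times\Z/16\Z$, the groups $\Z/4\Z\times\Z/4N\Z$ with $N\in\{2,3,4\}$, and $\Z/5\Z\times\Z/5\Z$, $\Z/6\Z\times\Z/6\Z$, $\Z/8\Z\times\Z/8\Z$, while attaching to $\Z/3\Z\times\Z/3N\Z$ and $\Z/4\Z\times\Z/4\Z$ the conditions $K=\Q(\sqrt{-3})$ and $K=\Q(\sqrt{-1})$.

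Writing $E(K_{cyc})_{\text{tors}}\cong\Z/m\Z\times\Z/mn\Z$, I would organize the pruning around four inputs. (a) \emph{Roots of unity.} The Weil pairing gives $\mu_m\subset K_{cyc}$; as $\Q_{\infty,3}$ is totally real, $\mu_4\subset K_{cyc}$ forces $K=\Q(\sqrt{-1})$ and $\mu_3\subset K_{cyc}$ forces $K=\Q(\sqrt{-3})$ (in which case in fact $K_{cyc}=\Q(\zeta_{3^\infty})$), while $\mu_5,\mu_8\not\subset K_{cyc}$. Combined with $m\in\{1,2,3,4,5,6,8\}$ from Theorem~\ref{allgroupslist}, this yields the two field conditions and kills $\Z/5\Z\times\Z/5\Z$ and $\Z/8\Z\times\Z/8\Z$. (b) \emph{The $2$-primary part.} Since $4\nmid[F:\Q]$, Lemma~\ref{2torsionlemma4} applies and collapses a cyclic, $\Z/2\Z\times\Z/4\Z$, or $\Z/2\Z\times\Z/8\Z$ shape of $E(K_{cyc})[2^\infty]$ to $E(K)[2^\infty]$; with Theorem~\ref{najmanquadratic} this removes $\Z/2\Z\times\Z/16\Z$. (c) \emph{Isogenies and fields of definition.} By Lemma~\ref{n-isogeny} the factor $n$ gives a rational $n$-isogeny, and a point of order $q^k$ in a kernel lies in a cyclic $F/\Q$ with $[F:\Q]\mid\phi(q^k)$; for $F\subset K_{cyc}$ one needs $[F:\Q]\mid 2\cdot3^a$ and, by Iwasawa~\cite{iwasawa}, that $F/K$ be unramified outside $3$. (d) \emph{Descent of full torsion.} When $E[\ell]\subset E(K_{cyc})$ the mod-$\ell$ image is abelian; for $\ell=3$ the determinant is the nontrivial mod-$3$ cyclotomic character, which excludes the scalar–unipotent as well as the degree-$4$ and degree-$6$ possibilities and forces $\Q(E[3])=\Q(\sqrt{-3})=K$, so full $3$-torsion descends to $K$.

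With these the cyclic eliminations split cleanly. For $N\in\{11,17,25\}$ one has $\gcd(\phi(N),2\cdot3^a)=2$, so the order-$N$ point lies in $K$, contradicting Theorem~\ref{najmanquadratic}. For $N\in\{43,67\}$ the fields from Table~1 of~\cite{Omer} are $\Q(\zeta_N)^+$ of degree $21,33$, whose prime factor $7$ or $11$ cannot divide $2\cdot3^a$. For $N\in\{19,37,163\}$ the field $\Q(\zeta_N)^+$ has degree $9,18,81$ dividing $2\cdot3^a$ but is ramified at $N$, hence excluded by (c). The exponent bounds $k\le3$ on each cyclic $q$-part and $N\le3$ on $\Z/3\Z\times\Z/3N\Z$ come from the nonexistence of rational $32$-, $81$-, $125$-isogenies via Theorem~\ref{rationalisogeny}, and the restriction $K=\Q(\sqrt{-3})$ there is (d). For the pure $2$-power groups $\Z/4\Z\times\Z/8\Z$ and $\Z/4\Z\times\Z/16\Z$ (so $K=\Q(\sqrt{-1})$) I would run the orbit argument of Lemma~\ref{2torsionlemma4}: the number of $x$-coordinates of the top-order points ($8$ resp.\ $16$) is $\not\equiv0\pmod3$, so some Galois orbit has size $\le2$, giving a generating point over $K$; ruling out the degree-$6$ possibility for $\Q(E[4])$ by the fact that no order-$3$ element of $\mathrm{GL}_2(\Z/4\Z)$ fixes a primitive vector then puts the whole group over $K$, against Theorem~\ref{najmanquadratic}.

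The genuine obstacles are $\Z/4\Z\times\Z/12\Z$, $\Z/14\Z$, and above all $\Z/6\Z\times\Z/6\Z$, which survive the soft arguments precisely because at $p=3$ torsion can descend only as far as the cyclic cubic layer $\Q(\zeta_9)^+\subset K_{cyc}$ rather than to $K$, a phenomenon absent for $p>5$. For $\Z/6\Z\times\Z/6\Z$, input (d) gives $E[3]\subset E(K)$ with $K=\Q(\sqrt{-3})$, and a short analysis of $\Q(E[2])\in\{\Q,\text{quadratic},\Q(\zeta_9)^+\}$ reduces everything to the single configuration $\Q(E[2])=\Q(\zeta_9)^+$ together with a rational $3$-torsion point; the hard part will be showing no $E/\Q$ realizes this, e.g.\ by combining the resulting good reduction outside $\{2,3\}$ with the $X_1(3)$-parametrization to get a finite checkable list. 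For $\Z/4\Z\times\Z/12\Z$ the cyclic $3$-part descends to $E(K)[3]$ by Lemma~\ref{4pairinggeneralmore}, and one argues as above to exclude $E[4]$ over the sextic $\Q(\sqrt{-1},\zeta_9^+)$. The elimination of $\Z/14\Z$ is of the same flavor: a point of order $14$ forces a rational $2$-torsion point and a $7$-torsion point over a field of degree dividing $6$, and one must verify from the finitely many curves with a rational $14$-isogeny that this $7$-division field never embeds into $K_{cyc}$, even though $\Z/7\Z$ alone is retained. I expect these few explicit checks, rather than the structural machinery, to be the decisive and most laborious step.
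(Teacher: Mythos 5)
Your overall route is the same as the paper's: start from Chou's abelian list (Theorem~\ref{allgroupslist}), prune via roots of unity and the Weil pairing (which kills $\Z/5\Z\times\Z/5\Z$, $\Z/8\Z\times\Z/8\Z$ and attaches the conditions $K=\Q(\sqrt{-1})$, $K=\Q(\sqrt{-3})$), use Lemma~\ref{4pairinggeneralmore} for $N=11,17,25$ and for descending the $15$-torsion, Lemma~\ref{2torsionlemma4} for $\Z/2\Z\times\Z/16\Z$, isogeny plus field-of-definition arguments for $N=14,19,37,43,67,163$, and a $\mathrm{GL}_2(\Z/4\Z)$ analysis plus orbit counting for $\Z/4\Z\times\Z/4N\Z$. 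Two of your local arguments actually improve on the paper's text. First, for $N=19,37,163$ the degrees $9,18,81$ of $\Q(\zeta_N)^+$ \emph{do} divide $2\cdot 3^a$, so degree counting fails and your appeal to Iwasawa's unramified-outside-$p$ theorem is exactly the right mechanism (the paper only gestures at an LMFDB check here, though it uses the same ramification argument for $11$-torsion in Theorem~\ref{prime5complete}). Second, your fixed-vector fact is correct (an order-$3$ element $A\in\mathrm{GL}_2(\Z/4\Z)$ reduces mod $2$ to an element with characteristic polynomial $x^2+x+1$, so $\det(A-I)$ is odd and $A$ fixes only $O$), and it is genuinely needed: the paper's Magma assertion that $\mathrm{GL}_2(\Z/4\Z)$ has no abelian subgroup of order $6$ is false --- $g=\left(\begin{smallmatrix}2&1\\3&3\end{smallmatrix}\right)$ has order $6$ and $\det g=3$, so $\langle g\rangle$ is cyclic of order $6$ with surjective determinant. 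Your argument repairs this for the cases $\Z/4\Z\times\Z/8\Z$ and $\Z/4\Z\times\Z/16\Z$, where the orbit count ($8$ or $16$ $x$-coordinates, neither divisible by $3$) produces an order-$8$ point $P\in E(K)$ whose double is a primitive vector of $E[4]$ fixed by a lift of the order-$3$ element.

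The genuine gaps are the three eliminations you leave as sketches. (i) For $\Z/14\Z$ your plan coincides with the paper's (finitely many curves with a rational $14$-isogeny; check that the relevant $7$-division fields do not embed in $K_{cyc}$), so this is acceptable in principle but unexecuted. (ii) For $\Z/4\Z\times\Z/12\Z$, ``one argues as above'' does not run: the $12$ points of order $4$ give $6$ $x$-coordinates, and $6\equiv 0\pmod 3$, so no small orbit is forced, and your fixed-vector lemma has no fixed point to feed on. You need a separate exclusion of the sextic possibility for $\Q(E[4])$, e.g.: a cyclic order-$6$ image forces complex conjugation to be the unique involution $g^3\equiv I\pmod 2$, which fixes no point of exact order $4$, contradicting $E(\mathbb{R})\supseteq\Z/4\Z$; then $\Q(E[4])=\Q(\sqrt{-1})=K$, the $3$-torsion descends to $K$ by Lemma~\ref{4pairinggeneralmore}, and $\Z/4\Z\times\Z/12\Z\subset E(K)$ contradicts Theorem~\ref{najmanquadratic}, which is the paper's finish. (iii) The substantive hole is $\Z/6\Z\times\Z/6\Z$. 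Your proposed exit --- good reduction outside $\{2,3\}$ plus the $X_1(3)$-parametrization --- does not obviously terminate in a finite check: $\Q(E[2])$ is invariant under quadratic twist, so the condition $\Q(E[2])=\Q(\zeta_9)^+$ holds for infinitely many $j$-invariants, and intersecting it with the existence of a rational $3$-torsion point is a level-$6$ modular condition whose finiteness you have not established. The clean closure is the one the paper's remark points to: $\mu_6\subset K_{cyc}$ forces $K=\Q(\sqrt{-3})$, hence $K_{cyc}=\Q(\mu_{3^\infty})$, whose torsion is classified by Gu\v{z}vi\'{c}--Krijan \cite{guzvickrijan} and does not contain $\Z/6\Z\times\Z/6\Z$ (alternatively, one can invoke the classification of abelian division fields $\Q(E[6])$). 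To your credit, you correctly identified that this group survives all the soft machinery --- the paper's own proof silently asserts for $d=3$ that ``the only new possibilities'' are $\Z/3\Z\times\Z/3N\Z$, even though Chou's list permits $\Z/6\Z\times\Z/6\Z$ once $\mu_6\subset K_{cyc}$ --- but flagging the difficulty is not the same as resolving it, and as written your proof of the stated list is incomplete at exactly this point.
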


\begin{proof}
We will start with a different strategy than the proof of Theorem \ref{maintheorem}, because it is faster to eliminate non-realized groups from Corollary \ref{allgroupslist}. Since $K_{cyc}$ is an abelian Galois extension, we can benefit from its special properties. By a similar argument used in the proof of Lemma \ref{rootsofunityinZpextensions}, we can show that if $d \neq 1, 3$, then there are no imaginary roots of unity in $K_{cyc}$. Thus, with the Weil pairing, we can deduce that $E(K)_{cyc} \cong \Z/N\Z$ for some positive integer $N \leq 163$ or $E(K)_{cyc} \cong \Z/2\Z \times \Z/2N\Z$ for $1 \leq N \leq 9$. If $d = 1$, we get $K = \Q(\sqrt{-1})$, thus it is possible to have $E(K)_{cyc} \cong \Z/4\Z \times \Z/4N\Z$ for $N = 1, 2, 3, 4$, and $N$ cannot be larger due to Corollary \ref{allgroupslist}. If $d = 3$, then $K = \Q(\sqrt{-3})$, and we get $K_{cyc} = \Q(\mu_{3^\infty})$, so it contains infinitely many roots of unity, but we can see that it cannot contain full $9$-torsion due to Corollary \ref{allgroupslist}, and the only new possibilities we get are $E(K)_{cyc} \cong \Z/3\Z \times \Z/3N\Z$ for $N = 1, 2, 3$.

By using Lemma \ref{4pairinggeneralmore}, we can eliminate $\Z/N\Z$ for $N = 11, 25$. By using Lemma \ref{2torsionlemma4}, we can eliminate $\Z/2\Z \times \Z/16\Z$. In order to eliminate $\Z/N\Z$ for $N = 14, 19, 37, 43, 67, 163$, we need to consider isogenies. If we assume $E(K_{cyc})_{\text{tors}} \cong \Z/N\Z$, then by Lemma \ref{n-isogeny}, we can say that $E$ has rational $N$-isogenies. For the given values of $N$, there are only finitely many rational elliptic curves with rational $N$-isogenies, and the list of them can be found in Table 4 of \cite{fieldofdefinition}. Then, we can use the LMFDB database to find out the field of definition of the $N$-torsion to see that none of them are contained in $K_{cyc}$. As a final note, we can use Lemma \ref{4pairinggeneralmore} to obtain $E(K)_{\text{tors}} \cong \Z/15\Z$, in the case $E(K_{cyc})_{\text{tors}} \cong \Z/15\Z$. We know that this is possible only when $K = \Q(\sqrt{-15})$ or $K = \Q(\sqrt{5})$ by Theorem \ref{najmanquadratic}.

Let us consider $E(K)_{cyc} \cong \Z/4\Z \times \Z/4N\Z$ for $N = 1, 2, 3, 4$. We will show that $N = 2, 3, 4$ is not possible. Since $E(K_{cyc})[4] \cong \Z/4\Z \times \Z/4\Z$, we can say that $\Gal(\Q(E[4])/\Q)$ is a subgroup of $\Gal(K_{cyc}/\Q)$. Any finite subfield of $K_{cyc}$ has extension degree $3^a$ or $2 \cdot 3^a$ over $\Q$. From Galois representations, we know that $\Gal(\Q(E[4])/ \Q)$ is also a subgroup of $GL_2(\Z/4\Z)$. This group has order $96$. Thus, we can say that the order of $\Gal(\Q(E[4])/\Q)$ must divide $96$ and be a number of the form $2 \cdot 3^a$. This means that its order can be $1, 2, 3, 6$. It also needs to be even because $\Q(E[4])$ contains $\Q(\sqrt{-1})$ due to the Weil pairing. Thus, the only possibilities are $N = 2, 6$. We can use Magma \cite{magma} to see that $GL_2(\Z/4\Z)$ has subgroups of order $6$, but none of them are abelian groups, so they cannot be a subgroup of $\Gal(K_{cyc}/\Q)$, which is an abelian group. From this, we deduce that $\Gal(\Q(E[4])/\Q)$ must have order $2$, and $\Q(E[4]) = \Q(\sqrt{-1})$ holds. We have shown that full $4$-torsion is defined over $K$. If $E(K_{cyc})$ has non-trivial $3$-torsion, then we can use Lemma \ref{4pairinggeneralmore} to see that it is defined over $K$ too. Then, $E(K)_{\text{tors}}$ must contain a subgroup isomorphic to $\Z/4\Z \times \Z/12\Z$, which contradicts Theorem \ref{najmanquadratic}, so we just proved that $N \neq 3$. If $N = 2, 4$, then we get $E(K_{cyc})[8] \cong \Z/4\Z \times \Z/8\Z$. By using a $\Gal(K_{cyc}/\Q)$ orbit argument on the $x$-coordinates of the points of order $8$, we can show that there exists a point of order $8$ that belongs to $E(K_{cyc})_{\text{tors}}$. Combined with $E(K)[4] \cong \Z/4\Z \times \Z/4\Z$ that we showed earlier, we obtain $E(K)[8] \cong \Z/4\Z \times \Z/8\Z$, which contradicts Theorem \ref{najmanquadratic}. This proves that $N \neq 2, 4$, hence $N = 1$ is the only realized possibility.
\end{proof}

\begin{remark}
  In this case, we encounter many exceptional torsion structures that we did not encounter before. The case $E(K_{cyc})_{\text{tors}} \cong \Z/21\Z$ is realized only when $E$ is the elliptic curve \lmfdbec{162}{b}{1}. The imaginary quadratic field $K$ can be anything because the torsion subgroup is defined over $\Q(\zeta_9)^+$. The case $E(K_{cyc})_{\text{tors}} \cong \Z/27\Z$ is realized only when $E$ is the elliptic curve \lmfdbec{27}{a}{4}. Again, $K$ can be anything because the torsion is defined over $\Q(\zeta_{27})^+$. The case $E(K_{cyc})_{\text{tors}} \cong \Z/18\Z$ can be realized over infinitely many elliptic curves because the modular curve $X_0(18)$ has genus zero, so there are infinitely many elliptic curves with rational $18$-isogeny. In theory, it is possible, but we could not find any elliptic curves satisfying this in the LMFDB database.
\end{remark}

\begin{remark}
  In the case where $K = \Q(\sqrt{-3})$, our cyclotomic extension $K_{cyc}$ is equal to the field $\Q(\mu_{3^\infty})$, defined as $\Q$ adjoined by all the $3^a$ roots of unity. In this case, the torsion subgroup $E(\Q(\mu_{3^\infty}))_{\text{tors}}$ is classified by Guzvic and Krijan in \cite{guzvickrijan}, where they also proved that $E(\Q(\mu_{3^\infty}))_{\text{tors}} = E(\Q(\mu_{27}))$.

\end{remark}

\begin{theorem}\label{prime3anti}
    Let $E/\Q$ be an elliptic curve. Let $K=\Q(\sqrt{-d})$ be an imaginary quadratic field where $d$ is a square-free positive integer with $d\neq 1,3$. Let $K_{anti}$ be the anti-cyclotomic $\Z_3$-extension of $K$. 
    Then,  $E(K_{anti})_{\text{tors}}$ is one of the following groups:
     $$E(K_{anti})_{\text{tors}} \simeq 
  \begin{cases}
     \mathbb{Z} / N \mathbb{Z} & \text{ with } 1\leq N\leq 10, \text{ or } N=12,15,16,  \\
           \mathbb{Z} / 2 \mathbb{Z} \times      \mathbb{Z} /2 N \mathbb{Z} & \text{ with } 1\leq N\leq 7,  \text{ or } N=9.
  \end{cases}  $$ 
\end{theorem}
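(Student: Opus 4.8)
The plan is to follow the skeleton of the proof of Theorem \ref{maintheorem}, analysing the $2$-primary part and the odd-primary parts of $E(K_{anti})_{\text{tors}}$ separately and then recombining, with one genuinely new difficulty appearing at the very end. First I would pin down the roots of unity in $K_{anti}$. Since complex conjugation $\tau$ acts on $\Gal(K_{anti}/K)\cong\Z_3$ by inversion (Main Lemma 1\textit{(ii)} of \cite{anticyclo}) and $2$ is a unit in $\Z_3$, the commutator subgroup of the pro-dihedral group $\Gal(K_{anti}/\Q)$ is all of $\Gal(K_{anti}/K)$, so the maximal abelian subextension of $K_{anti}/\Q$ is $K$ itself. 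As $\Q(\mu_n)/\Q$ is abelian, $\mu_n\subset K_{anti}$ forces $\Q(\mu_n)\subseteq K$, and because $d\neq 1,3$ this happens only for $n\le 2$. (This replaces Lemma \ref{rootsofunityinZpextensions}, which is stated only for $p>3$.) Hence $E(K_{anti})_{\text{tors}}$ is finite by Lemma \ref{finitetorsion}; writing it as $\Z/m\Z\times\Z/mn\Z$, the Weil pairing gives $\mu_m\subset K_{anti}$, so $m\le 2$, while Lemma \ref{n-isogeny} and Theorem \ref{rationalisogeny} give $n\le 19$ or $n\in\{21,25,27,37,43,67,163\}$.

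Next I would control each primary part. For every odd prime $q$ the Weil pairing excludes full $q$-torsion, so $E(K_{anti})[q^\infty]$ is cyclic and Lemma \ref{anticyclomain} descends it to $K$, giving $E(K_{anti})[q^\infty]=E(K)[q^\infty]$. For the prime $2$ the Weil pairing forces $E(K_{anti})[2^\infty]\cong\Z/2^a\Z\times\Z/2^b\Z$ with $a\le 1$. Applying Lemma \ref{2torsionlemma4} to the finite layers $K_j$ (which are Galois over $\Q$ of degree $2\cdot 3^j$, prime to $4$, with unique quadratic subfield $K$) shows that when $a=0$, and also when $a=1$ with $b\ge 2$, the whole $2$-primary part descends to $K$, with $b\le 4$ in the first case and $b\le 3$ in the second. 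Thus $E(K_{anti})[2^\infty]=E(K)[2^\infty]$ in every case \emph{except} possibly $E(K_{anti})[2]\cong\Z/2\Z\times\Z/2\Z$ while $E(K)[2]$ is strictly smaller; this is the one mechanism by which the anti-cyclotomic tower can enlarge the torsion, and it occurs precisely when $E$ acquires its full $2$-torsion over the first layer $K_1$, the unique $S_3$-subextension of $K_{anti}/\Q$.

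Combining the primary parts, $E(K_{anti})_{\text{tors}}$ has the same odd part as $E(K)_{\text{tors}}$ and the same $2$-part up to possibly completing a trivial or $\Z/2\Z$ $2$-part to full $2$-torsion. Feeding Najman's classification (Theorem \ref{najmanquadratic}, with $K\neq\Q(\sqrt{-1}),\Q(\sqrt{-3})$) into this, together with the constraint $m\le 2$, yields exactly the asserted list \emph{augmented by the single extra candidate} $\Z/2\Z\times\Z/30\Z$. Indeed, completing the $2$-torsion of a cyclic $\Z/N\Z$ with $N$ odd produces $\Z/2\Z\times\Z/2N\Z$, and the admissible odd $N$ with $\Z/N\Z\subseteq E(K)_{\text{tors}}$ are $N\in\{1,3,5,7,9,15\}$; of these $N=7$ and $N=9$ give the new groups $\Z/2\Z\times\Z/14\Z$ and $\Z/2\Z\times\Z/18\Z$ that appear in the statement, while $N=15$ gives the spurious $\Z/2\Z\times\Z/30\Z$.

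The hard part, and the only real obstacle, is eliminating $\Z/2\Z\times\Z/30\Z$. Its odd part $\Z/15\Z$ descends to $K$ by Lemma \ref{anticyclomain}, so $E(K)[15]\cong\Z/15\Z$; by Najman's theorem this forces $K=\Q(\sqrt{-15})$ and $E\in\{\lmfdbec{50}{b}{2},\lmfdbec{450}{b}{4}\}$, with $E(K)_{\text{tors}}=\Z/15\Z$ exactly, so $E(K)[2]$ is trivial. For $E(K_{anti})[2]$ to be full, $\Q(E[2])$ would have to be a Galois subextension of $K_{anti}/\Q$ whose group is both a quotient of the pro-dihedral $\Gal(K_{anti}/\Q)$ and a subgroup of $S_3$; the only possibilities are $\Q$, $K$, and $K_1$. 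The first two would place full $2$-torsion in $E(K)$, contradicting $E(K)_{\text{tors}}=\Z/15\Z$, so it remains to exclude $\Q(E[2])=K_1$. Here I would compare ramification: $K_1/\Q$ is ramified only at the primes dividing $\mathrm{disc}(K)$ and at $3$, and in particular is ramified at $3$, whereas for \lmfdbec{50}{b}{2} (good reduction at $3$) the field $\Q(E[2])$ is unramified at $3$, so $\Q(E[2])\neq K_1$; for \lmfdbec{450}{b}{4} this requires a direct computation of its $2$-division field (bad reduction at $3$ makes the clean ramification comparison unavailable). This finite verification rules out $\Z/2\Z\times\Z/30\Z$ and completes the classification.
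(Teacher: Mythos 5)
Your proposal is correct and follows the same skeleton as the paper: the odd part descends to $K$ via Lemma \ref{anticyclomain}, the $2$-primary part descends via Lemma \ref{2torsionlemma4} except in the single case $E(K_{anti})[2^\infty]\cong\Z/2\Z\times\Z/2\Z$, and combining with Theorem \ref{najmanquadratic} leaves exactly the stated list plus the one spurious candidate $\Z/2\Z\times\Z/30\Z$. Two of your additions are genuine improvements in explicitness over the paper's write-up: the abelianization argument (commutators in the pro-dihedral group fill up $\Gal(K_{anti}/K)$ since $2\in\Z_3^\times$, so the maximal abelian subextension is $K$ and $\mu_n\subset K_{anti}$ forces $n\le 2$) cleanly replaces Lemma \ref{rootsofunityinZpextensions}, which is only stated for $p>3$ and which the paper's proof uses implicitly; and your observation that $\Q(E[2])$, being Galois over $\Q$ with group in $\{1,C_2,C_3,S_3\}$, can only be $\Q$, $K$, or the first layer $K_1$ inside $K_{anti}$ (no $C_3$ quotient of a pro-dihedral group) isolates precisely the mechanism by which torsion can grow.

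Where you diverge is the endgame. The paper eliminates $\Z/2\Z\times\Z/30\Z$ via Lemma \ref{n-isogeny}: the group forces a rational $15$-isogeny, Table 4 of \cite{fieldofdefinition} gives only four candidate curves, and an LMFDB check of their torsion data (base field necessarily $\Q(\sqrt{-15})$) shows none works. You instead invoke the sharper final sentence of Theorem \ref{najmanquadratic} to pin down $E\in\{\text{50b2},\text{450b4}\}$ and $K=\Q(\sqrt{-15})$ directly, then exclude $\Q(E[2])=K_1$ by ramification at $3$ for 50b2 and defer 450b4 to a computation of its $2$-division field. This is on par with the paper's own finite LMFDB verification, so I do not count the deferred check as a gap, though you should note that 450b4 already has a rational $2$-torsion point (its rational torsion is $\Z/6\Z$), so $E(K)[2]$ is nontrivial, $E(K)_{\text{tors}}\neq\Z/15\Z$, and this curve in fact drops out of your case analysis before any division-field computation is needed. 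One small point does need filling in: your assertion that $K_1/\Q$ is ramified at $3$ is not automatic for the first layer of a $\Z_3$-extension (the tower is only eventually totally ramified above $3$); here it follows because $K_1/K$ unramified would force $K_1$ inside the Hilbert class field of $\Q(\sqrt{-15})$, which has degree $2$ over $K$ since $h(\Q(\sqrt{-15}))=2$. With that sentence added, your argument is complete and, in the $\Z/2\Z\times\Z/30\Z$ step, somewhat more structural than the paper's.
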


\begin{proof}
  This result is where the properties of the anti-cyclotomic extension make a great difference. In the previous result, we saw that multiple different torsion structures appeared when we worked with $\Z_3$-extensions. This is because we get sharper results when the extension degree is not divisible by $3$ while using Lemma \ref{4pairinggeneralmore}. We remedy this by using Lemma \ref{anticyclomain} to see that all the odd part of the torsion subgroup $E(K_{anti})_{\text{tors}}$ is realized over $K$. Now, we just have to analyze $E(K_{anti})[2^\infty]$. By Lemma \ref{2torsionlemma4}, we can see that $E(K_{anti})[2^\infty] = E(K)[2^\infty]$, except in the case $E(K_{anti})[2^\infty] \cong \Z/2\Z \times \Z/2\Z$. Other than this case, we obtain $E(K_{anti})_{\text{tors}} = E(K)_{\text{tors}}$, and we can use the classification in Theorem \ref{najmanquadratic}.

Now let us deal with the final case. Let $E(K_{anti})[2^\infty] \cong \Z/2\Z \times \Z/2\Z$, then we have $E(K_{anti})_{\text{tors}} \cong \Z/2\Z \times \Z/2N\Z$ for some odd positive integer $N$. In this case, by Lemma \ref{anticyclomain} and Theorem \ref{najmanquadratic}, we get $N = 1, 3, 5, 7, 9, 15$. All we have to do is to show that $E(K_{anti})_{\text{tors}} \cong \Z/2\Z \times \Z/30\Z$ is not possible. Let us assume that $E(K_{anti})_{\text{tors}} \cong \Z/2\Z \times \Z/30\Z$ holds. Then, by Lemma \ref{n-isogeny}, we can see that $E$ has a rational $15$-isogeny. Then, by using Table 4 in \cite{fieldofdefinition}, we can see there are only four possibilities for the elliptic curve $E$, and by looking at their torsion data in the LMFDB database, we can see that the base field must be $K = \Q(\sqrt{-15})$, but none of them satisfies $E(K_{anti})_{\text{tors}} \cong \Z/2\Z \times \Z/30\Z$. This concludes our classification.
\end{proof}

\begin{remark}
   In our classification, all groups except $\Z/2\Z \times \Z/14\Z$ and $\Z/2\Z \times \Z/18\Z$ can be realized as $E(K)_{\text{tors}}$ for some elliptic curve $E$ and a quadratic field $K$. However, the situation with the remaining two subgroups is unusual. If we choose the elliptic curve $E$ as \lmfdbec{54}{b}{3} and $K = \Q(\sqrt{-6})$, we obtain $E(K_{anti})_{\text{tors}} \cong \Z/2\Z \times \Z/18\Z$. On the other hand, we were unable to find any elliptic curve satisfying $E(K_{anti})_{\text{tors}} \cong \Z/2\Z \times \Z/14\Z$ in the LMFDB database.
\end{remark}

\begin{remark}
  When $K = \Q(\sqrt{-1})$, we can have $E(K_{anti})_{\text{tors}} \cong \Z/4\Z \times \Z/4N\Z$ for some integers $N$. If $N$ is odd, then we have shown that $N = 1, 3, 5, 7, 9$, because $E(K_{anti})[N] = E(K)[N]$. Moreover, by a $GL_2(\Z/4\Z)$ argument, we can show that $E(K_{anti})[4] = E(K_{anti}^{(1)})[4]$, where $K_{anti}^{(1)}$ is the first level of the anti-cyclotomic $\Z_3$-extension of $\Q(\sqrt{-1})$, which is the splitting field of the polynomial $x^3 - 3x - 4$ as seen in \cite{anticyclo}. Then, we obtain $E(K_{anti}^{(1)})_{\text{tors}} \cong \Z/4\Z \times \Z/4N\Z$. Since $K_{anti}^{(1)}$ is a sextic field, we can use Theorem 1 of \cite{sextic} to conclude that $N = 1, 3$. Moreover, if $N$ is even, we can show that $E(K_{anti}^{(1)})[8] \cong \Z/4\Z \times \Z/8\Z$ by using a $\Gal(K_{anti}/\Q)$ orbit argument. Again, this is not possible by the same result in \cite{sextic}.  
\end{remark}

\begin{remark}
    When $K = \Q(\sqrt{-3})$, we can have $E(K_{anti})_{\text{tors}} \cong \Z/3\Z \times \Z/3N\Z$ or $E(K_{anti})_{\text{tors}} \cong \Z/6\Z \times \Z/6N\Z$ for some integer $N$. If $N = 2^k 3^n m$ for some integers $k, n, m$, satisfying $\gcd(6, m) = 1$, we can see that $E(K_{anti})[m] = E(K)[m]$. We can use a $GL_2(\Z/3\Z)$ argument to show that $E(K_{anti})[3] = E(K_{anti}^{(1)})[3]$, where $K_{anti}^{(1)}$ is the first level of the anti-cyclotomic $\Z_3$-extension of $\Q(\sqrt{-3})$, which is the splitting field of the polynomial $x^3 - 3$ as seen in \cite{anticyclo}. Then, we can see that $E(K_{anti}^{(1)})_{\text{tors}}$ contains a subgroup isomorphic to $\Z/3\Z \times \Z/3m\Z$. Since $K_{anti}^{(1)}$ is a sextic field, we can use Theorem 1 of \cite{sextic} to conclude that $m = 1$. We have shown that $N = 2^k 3^n$ holds. In this case, we can also see that all the $2^k$-torsion is defined over $K$, by Lemma \ref{2torsionlemma4}. This means that $k = 0, 1, 2$ holds, as shown in \cite{sextic} in the $E(K_{anti})_{\text{tors}} \cong \Z/3\Z \times \Z/3N\Z$ case, and we get $N$ odd in the case $E(K_{anti})_{\text{tors}} \cong \Z/6\Z \times \Z/6N\Z$. In general, $N \in \{1, 2, 3, 4, 6, 8, 12, 16, 18\}$ must hold, just because $E$ has rational $N$-isogeny due to Lemma \ref{n-isogeny}.
\end{remark}

\end{document}